\renewcommand{\labelenumi}{\textup{(\roman{enumi})}}
\renewcommand\theenumi\labelenumi 
\renewcommand{\geq}{\geqslant}
\renewcommand{\leq}{\leqslant}
\theoremstyle{plain}
\newtheorem{theorem}{Theorem}[section]
\newtheorem{corollary}[theorem]{Corollary}
\newtheorem{lemma}[theorem]{Lemma}
\theoremstyle{definition}
\newtheorem{remark}[theorem]{Remark}
\newtheorem{example}[theorem]{Example}
\newcommand{\domain}{\mathscr{D}(\cA)}
\newcommand\diff{\mathrm{d}}
\newcommand{\real}{\mathds{R}}
\newcommand{\nat}{\mathds{N}}
\newcommand{\X}{(X_t)_{t\geq0}}
\newcommand{\Ym}{(\mathcal{Y}_t)_{t\geq0}}
\newcommand{\Xmm}{(\mathcal{X}^{(m)}_t)_{t\geq0}}
\newcommand{\Ymm}{(\mathcal{Y}^{(m)}_t)_{t\geq0}}
\newcommand{\Y}{(Y_t)_{t\geq0}}
\renewcommand{\L}{(L_t)_{t\geq0}}
\newcommand{\Ln}{(L^{(n)}_t)_{t\geq0}}
\newcommand{\testo}{\cC_c^\infty(\real)}
\newcommand{\integer}{\mathds{Z}}
\newcommand{\Xn}{(X^{(n)}_t)_{t\geq0}}
\newcommand{\B}{(B_t)_{t\geq0}}
\newcommand{\N}{(N_t)_{t\geq0}}
\newcommand{\sobolev}{W^{1,1}_\loc(0,\infty)}
\newcommand{\ppi}{\boldsymbol{\pi}}
\newcommand{\eeta}{\boldsymbol{\eta}}
\newcommand{\PP}{\mathds{P}}
\newcommand{\EE}{\mathds{E}}
\newcommand{\cB}{\mathcal{B}}
\newcommand{\cC}{\mathcal{C}}
\newcommand{\cD}{\mathcal{D}}
\newcommand{\cF}{\mathcal{F}}
\newcommand{\cG}{\mathcal{G}}
\newcommand{\cP}{\mathcal{P}}
\newcommand{\cA}{\mathcal{A}}
\newcommand{\cE}{\mathcal{E}}
\newcommand{\cO}{\mathcal{O}}
\newcommand{\ee}{\mathrm{e}}
\newcommand{\bone}{\mathds{1}}
\newcommand{\supp}{\operatorname{supp}}
\newcommand{\sgn}{\operatorname{sgn}}
\newcommand{\loc}{\mathrm{loc}}
\renewcommand{\epsilon}{\varepsilon} 
\definecolor{see}{HTML}{AA7700}
\definecolor{doubt}{HTML}{AF0000}
\definecolor{todo}{HTML}{0000AA}
\begin{document}
\frenchspacing
\allowdisplaybreaks[4]

\title{\bfseries Lévy Langevin Monte Carlo}
\author{David Oechsler\thanks{Technische Universit\"at
		Dresden, Institut f\"ur Mathematische Stochastik, Helmholtzstra{\ss}e 10, 01069 Dresden, Germany. \texttt{david.oechsler@tu-dresden.de}}~\thanks{Center of Scalable Data Analytics and Artificial Intelligence (ScDS.AI) Dresden/Leipzig, Germany}\;  }
\date{\today}
\maketitle
\begin{abstract}\noindent
Analogue to the well-known Langevin Monte Carlo method, in this article we provide a method to sample from a target distribution \(\ppi\) by simulating a solution of a stochastic differential equation. Hereby, the stochastic differential equation is driven by a general Lévy process which - other than in the case of Langevin Monte Carlo - allows for non-smooth targets. Our method will be fully explored in the particular setting of target distributions supported on the half-line \((0,\infty)\) and a compound Poisson driving noise. Several illustrative examples conclude the article.

\medskip\noindent
MSC 2020: \emph{primary} 60G51; 60H10; 60G10
. \emph{secondary} 65C05.

\medskip\noindent
\emph{Keywords:} Langevin Monte Carlo, Lévy processes, stochastic differential equations, invariant distributions, limiting distributions.
\end{abstract}

\section{Introduction}

Monte Carlo methods based on stationary Markov processes appear frequently in fields such as statistics, computer simulation and machine learning, and they have a variety of applications, for example in physics and biology, cf. \cite{bardenet2017markov,brooks2011handbook,kendall2005markov,FLI,welling2011bayesian}. These methods have in common that in order to sample from a target distribution \(\ppi\) one considers sample paths of certain Markov processes to approximate \(\ppi\).\\
\emph{Langevin Monte Carlo} (\textsc{lmc}) is one of these methods and it originates from statistical physics. It applies to absolutely continuous target distributions \(\ppi(\diff x)=\pi(x)\diff x\) with smooth density functions \(\pi:\real^d\to\real_+\), and its associated process \(\X\) is the so-called \emph{Langevin diffusion}, that is a strong solution of the stochastic differential equation (\textsc{sde})
\begin{align}\label{eq_LMC}
\diff X_t = -\frac{\nabla \pi(X_t)}{\pi(X_t)}\diff t + \sqrt{2}\diff B_t,
\end{align}
where \(\B\) is a standard Brownian motion on \(\real^d\), and \(\nabla\pi\) denotes the gradient of \(\pi\). For \textsc{LMC} to produce samples from \(\ppi\) it is required that \(\X\) is a unique strong solution for \eqref{eq_LMC} and \(\ppi\) is an invariant distribution for \(\X\), that is
\begin{align}\label{eq_invmeas}
\int_\real\PP^x(X_t\in B)\ppi(\diff x)=\ppi(B)\quad \text{for all}\quad t\geq0, B\in\cB(\real).
\end{align}
However, for this to be the case it is only natural that assumptions must be made regarding \(\ppi\), e.g. that \(\nabla \pi\) exists in a suitable sense. Moreover, to sample from \(\ppi\) using \(\X\) it is essential that \(\X\) converges to \(\ppi\) in a suitable sense from any starting point \(x\in\supp\ppi\). As solutions \(\X\) of \eqref{eq_LMC} are almost surely continuous, \(\supp\ppi\) is necessarily connected for convergence to be even possible. For more on \textsc{lmc} see \cite{brooks2011handbook} or \cite{roberts1996exponential}. \\
Due to the constraints of \textsc{lmc} it is reasonable to ask whether one could construct similar methods by replacing the Brownian motion with a more general process. In this article we consider Lévy processes as driving noises. In particular, we are interested in the following question:\\

\noindent
\textit{Given a distribution \(\ppi\) and a Lévy process \(\L\), can we choose a drift coefficient \(\phi\) such that we can sample from \(\ppi\) by simulation of a solution \(\X\) of 
\begin{align}\label{eq_sde}
\diff X_t &= \phi(X_{t})\diff t + \diff L_t?
\end{align}}

\noindent
There are various cases in the literature for which \textsc{sde}s of the form \eqref{eq_sde} are considered with Lévy processes as driving noises. In \cite{FLII} and \cite{FLI} a \emph{fractional Langevin Monte Carlo} (f\textsc{lmc}) method is introduced for which \(\L\) is an \(\alpha\)-stable process, and in \cite{eliazar}, several examples are produced for the case when the driving noise is a pure jump Lévy process. However, both of these studies are rather focused on practical aspects, disregarding some of the theoretical foundations. This will be discussed further in Remark \ref{rem_source}.\\
To thoroughly answer the above question it is essential to distinguish between the notions \emph{infinitesimally invariant distribution}, \emph{invariant distribution}, and \emph{limiting distribution}. These, and some more introductory notions and well-known facts can be found in Section \ref{sec_prel} of this article. After that, in Section \ref{sec_inva}, we investigate under which conditions a drift coefficient \(\phi\) exists such that \(\ppi\) is infinitesimally invariant for \(\X\). Clearly, there are cases for which this is not the case, think of discrete distributions, or distributions on a half-space while jumps can occur in all directions. Hence, a general answer can only exist under certain assumptions on the regularity of \(\ppi\) and the compatibility of \(\ppi\) and \(\L\). \\
In the same section, we then find a particular set of conditions under which \(\ppi\) is invariant and limiting for \(\X\). Various examples subsequently illustrate our results. Afterwards, in Section \ref{sec_prf}, we present the more technical aspects of the proofs, followed in Section \ref{sec_out} by a list of possible extensions with comments on the difficulties they might pose.\\
Methodologically we rely on the results in \cite{behmeoechsler} on invariant measures of Lévy-type processes, the Foster-Lyapunov methods originating in a series of articles by S.P. Meyn and R.L. Tweedie (\cite{MTI}-\cite{MTIII}), and standard techniques from the theory of ordinary differential equations.

\section{Preliminaries}\label{sec_prel}

Throughout this paper we denote by \(L^{p}(\real)\) and \(W^{k,p}(\real)\) the classic Lebesgue and Sobolev spaces, and by \(\cC_\infty(\real)\) the space of continuous functions vanishing at infinity, i.e. functions \(f\in\cC(\real)\) such that for all \(\varepsilon>0\) there exists a compact set \(K\subset\real\) such that for all \(x\in\real\setminus K\) it holds \(|f(x)|<\varepsilon\). \\
Let \(U\subseteq\real\) be an open set. As usual, \(\cC_c^\infty(U)\) denotes the space of test functions, i.e. smooth functions \(f\) with compact support \(\supp f\subset U\). Linear functionals \(T:\cC_c^\infty(U)\to\real\) that are continuous w.r.t. uniform convergence on compact subsets of all derivatives are called \emph{Schwartz distributions}. The \emph{distributional derivative} of a Schwartz distribution \(T\) is defined by \(T':\cC_c^\infty(U)\to\real, f\mapsto T(f')\). If there exists \(N\in \nat_0\) such that for all compact sets \(K\subset U\) there exists \(c>0\) such that 
\begin{align*}
|T(f)|\leq c\max\{|f^{(n)}|: n\leq N\}
\end{align*}
for all \(f\in\cC_c^\infty(U)\) with \(\supp f\subset K\), then the smallest such \(N\) is called the \emph{order} of \(T\). If no such \(N\) exists, the order of \(T\) is set to \(\infty\).

\paragraph{Markov processes and generators} 
Let \(\X\) be a Markov process in \(\real\) on the probability space \((\Omega,\cF,\PP)\). We denote
\begin{align*}
\PP^x(~\cdot~):=\PP(~\cdot~|X_0=x)\quad\text{and}\quad \EE^x[~\cdot~]:=\EE[~\cdot~|X_0=x]
\end{align*}
for all \(x\in\real\).
The \emph{pointwise generator} of \(\X\) is the pair \((\cA,\domain)\) defined by
\begin{align*}
\cA f(x):=\lim_{t\downarrow0} \frac{\EE^x f(X_t) - f(x)}{t},\quad x\in\real,\quad f\in\domain
\end{align*}
where 
\begin{align*}
\domain:=\left\{f\in\cC_\infty(\real): \lim_{t\downarrow0} \frac{\EE^x f(X_t) - f(x)}{t} \text{ exists for all } x\in\real \right\}.
\end{align*}
Further, denote by \(\cD(\cG)\) the set of all functions \(f:\real\to\real\) for which there exists a measurable function \(g:\real\to\real\) such that for all \(x\in\real\) and \(t>0\) it holds
\begin{align*}
\EE^xf(X_t) = f(x) + \EE^x\left[\int_0^t g(X_s)\diff s\right]
\end{align*}
and
\begin{align*}
\int_0^t\EE^x\left[ |g(X_s)|\right]\diff s <\infty.
\end{align*}
Setting \(\cG f=g\) the pair \((\cG,\cD(\cG))\) is called the \emph{extended generator} of \(\X\).

\paragraph{Lévy processes}
A (one-dimensional) Lévy process \(\L\) is a Markov process with stationary and independent increments with characteristic exponent \(\varphi(\beta):=\ln\EE[\ee^{i\beta L_1}]\) given by  
\begin{align*}
\varphi(\beta)=i\gamma\beta - \frac12\sigma^2\beta^2 + \int_{\real}\left(\ee^{i\beta z}-1\right)\mu(\diff z) +  \int_{\real^d}\left(\ee^{i\beta z}-1-i\beta z \bone_{\{|z|<1\}}\right)\rho(\diff z).
\end{align*}
Here, \(\gamma\in\real\) is the \emph{  location   parameter}, \(\sigma^2\geq0\) is the \emph{Gaussian parameter}, and \(\mu\) and \(\rho\) are two measures on \(\real\) such that \(\mu\{0\}=\nu\{0\}=0\) and \(\int_{\real}(1\wedge|z|)\mu(\diff z)<\infty\) and \(\int_{\real}(|z|\wedge|z|^2)\rho(\diff z)<\infty\), respectively. The measure \(\Pi=\mu+\rho\) is called the \emph{jump measure}. The triplet \((\gamma,\sigma^2,\Pi) =(\gamma,\sigma^2,\mu+\rho) \) is called the \emph{characteristic triplet} of \(\L\). Note that the decomposition of \(\Pi\) into \(\mu\) and \(\rho\) is not unique.\\
Further, denote by
\begin{align*}
\overline\mu(x)&=\begin{cases}
\mu(x,\infty), &x>0,\\
\mu(-\infty,x), &x<0,\\
0, & x=0,
\end{cases}
\end{align*}
the \emph{integrated tail} of \(\mu\), and by \(\overline\mu_s(x):=\sgn(x)\overline\mu(x)\) the \emph{signed integrated tail} of \(\mu\). We similarly define the \emph{double integrated tail} of \(\rho\) by
\begin{align*}
\overline{\overline\rho}(x)&=\begin{cases}
\int_{(x,\infty)}\rho(z,\infty)\diff z, &x>0,\\
\int_{(-\infty,x)}\rho(-\infty,z)\diff z, &x<0,\\
0, & x=0.
\end{cases}
\end{align*}
A Lévy process \(\L\) with \(\sigma^2=0,~\rho=0\) and \(|\mu|<\infty\) is called a \emph{compound Poisson process}. If, additionally, \(\supp\mu\subset\real_+\) then \(\L\) is called a \emph{spectrally positive compound Poisson process}.

\paragraph{Invariant measures and Harris recurrence} Let \(\X\) be a Markov process on \(\real\) with open state space \(\mathcal O\subseteq\real\) and with pointwise generator \((\cA,\domain)\). As mentioned in the introduction, a measure \(\ppi\) with \(\supp\ppi\subset\overline{\mathcal O}\) is called \emph{invariant} for \(\X\) if \eqref{eq_invmeas} holds. It is called \emph{infinitesimally invariant} for \(\X\) if
\begin{align*}
\int_\real \cA f(x)\ppi(\diff x) = 0\quad \text{for all } f\in\cC_c^\infty(\mathcal O),
\end{align*}
and \(\ppi\) is called \emph{limiting} for \(\X\) if \(\ppi\) is a \emph{distribution}, i.e. \(|\ppi|=1\), and
\begin{align*}
\lim_{t\to\infty}\|\PP^x(X_t\in \cdot)-\ppi\|_{\mathrm{TV}}=0, \quad \text{for all } x\in\mathcal O,
\end{align*}
where \(\|\cdot\|_{\mathrm{TV}}\) denotes the total variation norm.\\
The process \(\X\) is called \emph{Harris recurrent} if there exists a non-trivial \(\sigma\)-finite measure \(a\) on \(\real\) such that for all \(B\in\cB(\real)\) with \(a(B)>0\) it holds \(\PP^x(\tau_B<\infty)=1\) where \(\tau_B:=\inf\{t\geq0: X_t\in B\}\). It is well-known (cf. \cite{MTIII}) that for any Harris recurrent Markov process \(\X\) an invariant measure \(\ppi\) exists which is unique up to multiplication with a constant.  If \(\ppi\) is finite it can be normalized to be a distribution. In this case \(\X\) is called \emph{positive Harris recurrent}.

\section{Lévy Langevin Monte Carlo}\label{sec_inva}

Let \(\ppi\) be a probability distribution on \(\real\), and let \(\L\) be a Lévy process on \(\real\). Further, let \(\X\) be a solution of
\begin{align*}
\diff X_t = \phi(X_t)\diff t + \diff L_t.
\end{align*}
Can we choose \(\phi:\real\to\real\) in such a way that \(\ppi\) is limiting for \(\X\)? In the spirit of f\textsc{lmc} we call the sampling of \(\X\) in order to sample from \(\ppi\) \emph{Lévy Langevin Monte Carlo} (\textsc{llmc}).\\
As mentioned in the introduction a general answer to this question cannot be given without certain conditions on \(\ppi\) and \(\L\). Throughout, we assume that \(\ppi(\diff x)=\pi(x)\diff x\) is absolutely continuous, and that the Lévy process \(\L\) with characteristic triplet \((\gamma,\sigma^2,\Pi)\) is not purely deterministic, i.e. \(\sigma^2>0\) or \(\Pi\neq0\). Recall that \(\Pi=\mu+\rho\) with \(\mu\) and \(\rho\) as in Section \ref{sec_prel}.\\
Define \(\cE:=\{x\in\real: \pi(x)>0\}\) and assume either \(\cE=\real\) or some open half-line - without loss of generality we choose in this case \(\cE=(0,\infty)\). This choice of \(\cE\) is not a real restriction, as explained further in Section \ref{sec_out}. Additionally we assume the following:
\begin{enumerate}
\item[\textbf{(a1)}] If \(\cE=(0,\infty)\), then \(\L\) is a spectrally positive compound Poisson process.
\item[\textbf{(a2)}] If \(\cE=(0,\infty)\), then there exists \(c>0\) such that \(\int_0^x\pi(z)\diff z\leq cx\pi(x)\) for all \(x\ll1\).
\item[\textbf{(a3)}] If \(\L\) has paths of unbounded variation, then \(\pi\in\sobolev\).
\end{enumerate}

\subsection{Infinitesimally invariant distributions}

\begin{theorem}\label{thm_infinv}
Let \(\L\) be a Lévy process in \(\real\) with characteristic triplet \((\gamma,\sigma^2,\Pi)\) with \(\Pi=\mu+\rho\) as in Section \ref{sec_prel}. Let \(\ppi\) be a distribution on \(\real\) such that \textbf{(a1)} - \textbf{(a3)} are fulfilled. Consider the \textsc{sde} \eqref{eq_sde} with
\begin{align}\label{eq_drift_coeff}
\phi(x):=\bone_{\cE}(x)\left(\frac{\frac{1}2\sigma^2 \pi'(x)- \overline\mu_s *\pi(x)+ (\overline{\overline\rho}*\pi)'(x)}{\pi(x)}-\gamma\right).
\end{align}
Then \(\ppi\) is an infinitesimally invariant distribution of any solution \(\X\) of \eqref{eq_sde}.  
\end{theorem}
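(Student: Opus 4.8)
The strategy is to verify the infinitesimal invariance identity $\int_\real \cA f(x)\,\ppi(\diff x)=0$ for every $f\in\cC_c^\infty(\cE)$ directly, by writing down the generator of the SDE \eqref{eq_sde} explicitly and integrating against $\pi$. The pointwise generator of a solution of $\diff X_t=\phi(X_t)\diff t+\diff L_t$ (for $f$ smooth with compact support, where there is no issue applying Itô's formula for Lévy-driven SDEs) is
\begin{align*}
\cA f(x)=\bigl(\phi(x)+\gamma\bigr)f'(x)+\tfrac12\sigma^2 f''(x)+\int_\real\bigl(f(x+z)-f(x)-zf'(x)\bone_{\{|z|<1\}}\bigr)\Pi(\diff z),
\end{align*}
with the understanding that the $zf'(x)\bone_{\{|z|<1\}}$ compensator is only attached to the $\rho$-part and not to the (finite-mass) $\mu$-part, matching the decomposition in the excerpt. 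So the first step is to substitute \eqref{eq_drift_coeff} for $\phi$ and observe that $(\phi(x)+\gamma)\pi(x)=\tfrac12\sigma^2\pi'(x)-\overline\mu_s*\pi(x)+(\overline{\overline\rho}*\pi)'(x)$ on $\cE$, so that the drift term, after multiplying by $\pi$ and integrating, is designed precisely to cancel the remaining terms.

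The core of the argument is three integration-by-parts / Fubini computations, one for each piece of the generator. For the diffusion part, $\int f''(x)\cdot\tfrac12\sigma^2\pi(x)\,\diff x$ and $\int f'(x)\cdot\tfrac12\sigma^2\pi'(x)\,\diff x$ combine to zero after one integration by parts (the boundary terms vanish since $f$ has compact support in $\cE$; under \textbf{(a3)} $\pi\in W^{1,1}_{\loc}$ makes this legitimate when $\sigma^2>0$). For the jump part coming from $\rho$, the key identity is that $\int\bigl(f(x+z)-f(x)-zf'(x)\bone_{\{|z|<1\}}\bigr)\rho(\diff z)$ integrated against $\pi(x)\,\diff x$ equals, after Fubini and two integrations by parts in $x$ (moving derivatives off $f$ and onto the double-integrated tail), $-\int f'(x)(\overline{\overline\rho}*\pi)'(x)\,\diff x$; this is exactly the classical computation relating the jump generator to the double-integrated tail (as in \cite{behmeoechsler}). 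Similarly, for the compound-Poisson part coming from $\mu$, $\int\bigl(f(x+z)-f(x)\bigr)\mu(\diff z)$ integrated against $\pi$ gives $\int f'(x)\,\overline\mu_s*\pi(x)\,\diff x$ after one Fubini and one integration by parts. Adding the three and using the cancellation built into $\phi$ yields $\int\cA f\,\diff\ppi=0$.

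The main obstacle is justifying the interchanges of integration and the integrations by parts at the left endpoint when $\cE=(0,\infty)$: although $f$ is compactly supported inside $(0,\infty)$, the convolutions $\overline\mu_s*\pi$ and $\overline{\overline\rho}*\pi$ and their derivatives involve values of $\pi$ near $0$, and one must check these objects are locally integrable and that no boundary contribution at $0$ sneaks in. This is exactly where assumption \textbf{(a2)} enters — the bound $\int_0^x\pi\le cx\pi(x)$ for small $x$ controls $\overline\mu_s*\pi$ near the origin for a spectrally positive compound Poisson process (where only positive jumps occur, so the relevant tail integral pairs against $\pi$ on a bounded set) — and \textbf{(a1)} guarantees $\sigma^2=0$ and $\rho=0$ in that case, so only the $\mu$-term is present and \textbf{(a3)} is vacuous. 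A secondary, more routine point is confirming that $\phi$ as defined is finite and that $\cA f$ is genuinely given by the Itô formula for all $f\in\cC_c^\infty(\cE)$, i.e. that such $f$ lie in $\domain$; this follows from standard results on Lévy-type generators once the integrability of the tail convolutions is in hand. I would organize the write-up as: (1) generator formula and reduction to the three integral identities; (2) the diffusion identity; (3) the $\rho$-identity via double-integrated tail; (4) the $\mu$-identity via signed integrated tail; (5) a lemma collecting the integrability/boundary estimates from \textbf{(a1)}–\textbf{(a3)} that licenses steps (2)–(4).
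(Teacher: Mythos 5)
Your computational core---writing the generator explicitly, integrating against \(\pi\), and undoing the convolutions by Fubini and integration by parts---is sound, and it amounts to a re-derivation of the result the paper simply cites (\cite{behmeoechsler}, Thm.~4.2): infinitesimal invariance reduces to the distributional equation \(-((\phi+\gamma)\eeta)'+\tfrac12\sigma^2\eeta''-(\overline\mu_s*\eeta)'+(\overline{\overline\rho}*\eeta)''=0\), which \(\ppi\) satisfies by the very construction of \(\phi\). That half of your plan is fine, whether you carry out the adjoint computation yourself or cite it.

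The gap is that you never identify the state space \(\cO\) of \(\X\). The definition of infinitesimal invariance tests against \(f\in\cC_c^\infty(\cO)\), not against \(f\in\cC_c^\infty(\cE)\); verifying the identity for test functions supported in \(\cE\) proves the theorem only once you know \(\cO=\cE\). When \(\cE=(0,\infty)\) this is exactly where \textbf{(a1)} and \textbf{(a2)} do their work in the paper's proof: \textbf{(a1)} (spectrally positive compound Poisson) rules out exit through jumps and removes the \(\sigma^2\)-, \(\rho\)- and \(\gamma\)-terms, while \textbf{(a2)} rules out exit through the drift---between jumps the path solves \(\dot X=\phi(X)=-\overline\mu_s*\pi(X)/\pi(X)\), separation of variables gives the time to drift from \(x\) to \(x'\) as \(\int_{x'}^{x}\pi(z)/(\overline\mu_s*\pi(z))\,\diff z\), and \textbf{(a2)} yields \(\overline\mu_s*\pi(z)\le cz\pi(z)\), so this time diverges as \(x'\downarrow0\). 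Your reading of \textbf{(a2)} as an integrability/boundary control for the Fubini and integration-by-parts steps is a misattribution: for \(f\) compactly supported in \((0,\infty)\) and \(|\mu|<\infty\) the convolution \(\overline\mu_s*\pi\) is bounded by \(|\mu|\) and no boundary contribution at \(0\) arises, so those steps go through without \textbf{(a2)}. What \textbf{(a2)} actually buys is non-attainability of \(0\), hence that \(\cC_c^\infty(\cO)=\cC_c^\infty(\cE)\) is the correct test class. Add that argument and your proof is complete.
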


The proof of Theorem \ref{thm_infinv} will be clearer if we point out the primary thoughts behind Assumptions \textbf{(a1)} - \textbf{(a3)} first. 
\begin{remark}
Assumptions \textbf{(a1)} and \textbf{(a2)} make sure that the process \(\X\) stays in the open half-line \((0,\infty)\) if \(\cE=(0,\infty)\). The former does so by allowing only upward jumps while the latter guarantees that \(\X\) cannot drift onto \(0\), as we will see in the proof below.\\ 
Clearly, Assumption \textbf{(a3)} becomes only relevant if \(\cE=\real\), and it ensures that our choice of the drift coefficient in \eqref{eq_drift_coeff} is well-defined. Note that it can be weakened if \(\sigma^2=0\) as \(\pi\in\sobolev\) is sufficient but not necessary for \((\overline{\overline\rho}*\pi)'\) to be well-defined. However, since we discuss in this article mostly processes with paths of bounded variation we choose to omit various special cases for the sake of clarity.
\end{remark}

\begin{proof}[Proof of Theorem \ref{thm_infinv}]
Denote by \(\mathcal O\subset\real\) the state space of \(\X\). In order to show that \(\cO=\cE\) we prove that \(X_t\in\cE\) for all \(t\geq0\) if \(X_0\in\cE\). As this is trivially true for \(\cE=\real\) we show it only for \(\cE=(0,\infty)\). \\
In this case \(\L\) is a spectrally positive compound Poisson process, by \textbf{(a1)}. Thus, \(\X\) cannot exit \(\cE\) via jumps. We are going to show that \(\X\) cannot exit via drift either. If no jump of \(\L\) interrupts the path of \(\X\) then \(t\mapsto X_t\) is monotone decreasing and follows the autonomous differential equation
\begin{align*}
\begin{cases}
\diff X_t=\phi(X_t)\diff t,\\
X_0=x>0,
\end{cases}
\end{align*}
with \(\phi(x)=-\bone_{(0,\infty)}(x)\frac{\overline\mu_s*\pi(x)}{\pi(x)}\). Separation of variables yields that the time \(T\) it takes for \(\X\) to drift from \(x\) to \(x'\in[0,x]\) is given by
\begin{align*}
T= \int_{x'}^x \frac{\pi(z)}{\overline\mu_s*\pi(z)}\diff z.
\end{align*}
By \textbf{(a1)} and \textbf{(a2)}, \(\overline\mu_s*\pi(x)\leq cx\pi(x)\) for some constant \(c>0\), and thereby
\begin{align}\label{eq_subor}
\int_0^x \frac{\pi(z)}{\overline\mu_s*\pi(z)} \diff z =\infty
\end{align}
for all \(x>0\). Hence, \(\X\) cannot drift onto \(0\) in finite time. Therefore, \(\cO=\cE\) in this case as well.\\
We now return to the general case. A straight-forward application of Itô's lemma and the Lévy-Itô decomposition, similar to \cite[Thm. 2.50]{schnurr2009symbol}, yields that for the pointwise generator   \((\cA,\domain)\) of \(\X\) it holds \(\cC_c^\infty(\mathcal O)\subset\domain\), and \begin{align*}
\cA f(x)&= (\phi(x)+\gamma)f'(x) + \frac12 \sigma^2 f''(x)\\
&\qquad+ \int_{\real} (f(x+z)-f(x))\mu(\diff z)\\
&\qquad + \int_{\real} (f(x+z)-f(x)- \nabla f(x))\rho(\diff z)
\end{align*}
for all \(f\in\cC_c^\infty(\mathcal O)\). By \cite[Thm. 4.2]{behmeoechsler} a measure \(\eeta\) is infinitesimally invariant for \(\X\) if
\begin{align}\label{eq_iile}
-((\phi+\gamma) \eeta)' +   \frac12\sigma^2  \eeta''- (\overline\mu_s*\eeta)' +  (\overline{\overline\rho}*\eeta)'' = 0
\end{align}
in the distributional sense w.r.t. \(\cC_c^\infty(\mathcal O)\). Because of \(\cO=\cE\), simply inserting \(\ppi\) into \eqref{eq_iile} proves the claim.
\end{proof}
 
\subsection{Invariant distributions}

In general, proving that an infinitesimally invariant distribution is an invariant distribution is hard. The best-case scenario is given when \(\X\) is a Feller process and the test functions constitute a core of the pointwise generator of \(\X\). In this case infinitesimally invariant and invariant are equivalent notions, cf. \cite{liggett}.\\
Although there exist easily verifiable conditions on the drift coefficient \(\phi\) and \(\L\) such that a solution of \eqref{eq_sde} is a Feller process (cf. \cite{kuhn}) these have some drawbacks. The fact that typically, \(\phi\) is required to be continuous and fulfills a linear growth condition, i.e. \(|\phi(x)|\leq C(1+|x|)\) for some \(C>0\), excludes many interesting cases. Moreover, even if \(\X\) is a Feller process, we are still left with the question whether the test functions form a core. The task of finding conditions for this to be the case is an open problem (cf. \cite{bottcher}) which has not yet been answered to the best of our knowledge.

\begin{remark}\label{rem_source}
Both the article \cite{eliazar} on Lévy Langevin dynamics and the original article \cite{FLI} on f\textsc{lmc} do not provide arguments as to why the considered target measures are invariant for the respective processes.\\
The chosen appraoch in both articles revolves around finding a stationary solution for Kolmogorov's forward equation of the underlying \textsc{sde} \eqref{eq_sde}. This equation, which is also known as Fokker-Planck equation, is inherently connected to invariant distributions as any weak stationary solution of it can be associated to an infinitesimally invariant measure of a solution \(\X\) of \eqref{eq_sde}. In the aforementioned articles it is suggested that the transition densities \(p(t,x,y)\) of \(\X\) defined via
\begin{align*}
\PP^x(X_t\in B)=\int_B p(t,x,y)\diff y
\end{align*}
solve the associated Kolmogorov forward equation. For many processes this is true, e.g. Feller diffusions (cf. \cite{kallenberg}) just to name one. However, for \textsc{sde}s \eqref{eq_sde} with general Lévy noises we were not able to find a reference with a rigorous proof of this claim. If it was indeed true, then any invariant measure \(\ppi\) of \(\X\) would necessarily be a stationary solution of Kolmogorov's forward equation. \\
Moreover, both articles are missing an argument as to why the stationary solution of Kolmogorov's forward equation is unique. Although in \cite{FLI} another article (cf. \cite{schertzer2001fractional}) is cited on this topic, in said reference uniqueness is merely argued heuristically but not proved. 
\end{remark}

To ensure methodological rigor, we present in this section a different way of showing that \(\ppi\) is an invariant distribution of \(\X\), and as such even unique. We consider this approach in the special case of \(\cE=(0,\infty)\) and \(\L\) being a compound Poisson process but similar results can be achieved in other frameworks by adjusting the individual steps in a suitable way. This will also be discussed in Section \ref{sec_exam} below.\\

\noindent
Let us now briefly describe our setting. Denote by
\begin{align*}
\mathcal P:=\{(x_i)_{i\in\integer}\subset (0,\infty)^\integer:~& x_i<x_{i+1} \text{ for all }i\in\integer, \text{ and}\\
&0 \text{ is the unique accumulation point of } (x_i)_{i\in\integer}\}
\end{align*} 
a set of partitions of the open interval \((0,\infty)\). We call a function \(f\in L^1_\loc(0,\infty)\) \emph{piecewise weakly differentiable} if there exists a partition \((x_i)_{i\in\integer}\in \mathcal P\) such that \(f|_{(x_i,x_i+1)}\in W^{1,1}(x_i,x_{i+1})\) for all \(i\in\integer\). Analogously, we call \(f\) \emph{piecewise Lipschitz continuous} if there exists a partition \((x_i)_{i\in\integer}\in\mathcal P\) such that \(f|_{(x_i,x_i+1)}\) is Lipschitz continuous for all \(i\in\integer\). \\
Let \(\L\) be a Lévy process in \(\real\) and let \(\ppi(\diff x)=\pi(x)\diff x\) be an absolutely continuous distribution on \((0,\infty)\). Our assumptions are as follows:
\begin{enumerate}
\item[\textbf{(b1)}] \(\pi\) is a positive, piecewise weakly differentiable function, and there exist constants \(C,C',\alpha>0\) such that \(\lim_{x\to\infty}\pi(x)\ee^{\alpha x}=C\), and \(\int_0^x \pi(z)\diff z\leq C' \pi(x)x\) for \(x\ll1\).
\item[\textbf{(b2)}] \(\L\) is a spectrally positive compound Poisson process, i.e. a Lévy process with characteristic triplet \((0,0,\mu)\) such that \(\supp\mu\subset\real_+\) and \(\int_0^\infty (1 \vee z)\mu(\diff z)<\infty\).
\end{enumerate}
Note that our standing assumptions \textbf{(a1)} - \textbf{(a3)} are direct consequences of \textbf{(b1)} and \textbf{(b2)}. In this setting, the drift coefficient given by \eqref{eq_drift_coeff} is reduced to
\begin{align}\label{eq_driftcpn}
\phi(x)= -\bone_{(0,\infty)}(x)\frac{\overline\mu_s*\pi(x)}{\pi(x)},
\end{align}
and it is easy to see that \(\phi(x)\in(-\infty,0)\) for all \(x>0\). \\
Sometimes it will be advantageous to write \(L_t=\sum_{i=1}^{N_t}\xi_i\), where \(\N\) is a Poisson process with intensity \(|\mu|\) and \((\xi_i)_{i\in\nat}\) is a sequence of i.i.d. random variables distributed according to \(\mu/|\mu|\). Note that \(\EE\xi_1<\infty\) by \textbf{(b2)}.\\
With the following theorem we show that, under \textbf{(b1)} and \textbf{(b2)}, a solution \(\X\) of \eqref{eq_sde} has the unique invariant distribution \(\ppi\) if, additionally, one of the following two conditions is met:
\begin{enumerate}
\item[\textbf{(c1)}] There exists \(n\in\nat\) such that \(\supp \mu \subset(1/n,n)\), or
\item[\textbf{(c2)}] \(\pi\) is piecewise Lipschitz continuous.
\end{enumerate} 

\begin{theorem}\label{thm_cpn}
Assume that \textbf{(b1)} and \textbf{(b2)} hold, and let \(\X\) be a solution of \eqref{eq_sde} with \(\phi\) as in \eqref{eq_driftcpn}. Then
\begin{enumerate}
\item \(\X\) is positive Harris recurrent, and
\item any invariant distribution of \(\X\) is an infinitesimally invariant distribution of \(\X\).
\end{enumerate}
Additionally, if \textbf{(c1)} or \textbf{(c2)} are fulfilled, then
\begin{enumerate}
\item [(iii)] \(\pi\) is the unique invariant distribution of \(\X\).
\end{enumerate}
\end{theorem}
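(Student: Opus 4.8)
The plan is to establish (i)--(iii) in sequence, using the Meyn--Tweedie Foster--Lyapunov machinery for the recurrence claims and the earlier Theorem~\ref{thm_infinv} together with a uniqueness argument for the remaining parts.

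\textbf{Step 1: Positive Harris recurrence (i).}
First I would verify that $\X$ is a well-defined (non-explosive) strong Markov process on $\cO=(0,\infty)$: between jumps it follows the autonomous ODE $\diff X_t=\phi(X_t)\diff t$ with $\phi<0$ continuous on $(0,\infty)$, and by \textbf{(b1)} the integral $\int_0^x \pi(z)/(\overline\mu_s*\pi(z))\,\diff z=\infty$ (exactly as in the proof of Theorem~\ref{thm_infinv}, using $\overline\mu_s*\pi(x)\le C'x\pi(x)$), so the deterministic flow cannot reach $0$ in finite time; jumps are upward and finite in number on compacts, so no explosion occurs. Then I would produce a Foster--Lyapunov (drift) function $V:(0,\infty)\to[1,\infty)$ with $V(x)\to\infty$ as $x\to\infty$ and $x\downarrow0$, such that $\cA V(x)\le -1$ outside a compact set $K\subset(0,\infty)$. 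The natural candidate is $V(x)=\ee^{\beta x}+x^{-1}$ (or a smooth variant), with $0<\beta<\alpha$: the term $\phi(x)V'(x)$ is negative and, because $\phi(x)\to -\infty$ as $x\downarrow 0$ while the jump part $\int_0^\infty(V(x+z)-V(x))\mu(\diff z)$ stays controlled near $0$, the drift is strongly negative near $0$; for large $x$, $\int_0^\infty(\ee^{\beta(x+z)}-\ee^{\beta x})\mu(\diff z)=\ee^{\beta x}(\widehat\mu(\beta)-|\mu|)$ is a bounded multiple of $\ee^{\beta x}$ while one shows $\phi(x)\to$ a sufficiently negative limit (using $\lim_{x\to\infty}\pi(x)\ee^{\alpha x}=C$, which forces $\overline\mu_s*\pi(x)/\pi(x)\to$ a constant of size $\tfrac{1}{\alpha}(|\mu|-\widehat\mu(-\alpha))$ or similar) so that the linear-in-$\ee^{\beta x}$ terms combine with a negative sign once $\beta$ is chosen small enough. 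Here $\widehat\mu$ denotes the appropriate (Laplace-type) transform of $\mu$, finite by \textbf{(b2)}. Combined with a small-set/irreducibility check for a compact interval --- which follows because the flow is a strictly decreasing diffeomorphism and a single jump of size in any $\mu$-positive set lands the process in an interval, giving a density component, hence $\psi$-irreducibility and aperiodicity --- the Meyn--Tweedie theorem (\cite{MTII},\cite{MTIII}) yields positive Harris recurrence, so a unique (up to scaling) invariant distribution exists.

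\textbf{Step 2: Invariant $\Rightarrow$ infinitesimally invariant (ii).}
Given the invariant distribution $\ppi_0$, I would show $\int \cA f\,\diff\ppi_0=0$ for every $f\in\cC_c^\infty(0,\infty)$. The clean route is to use the extended generator: for $f\in\cC_c^\infty(\cO)$ one has $f\in\cD(\cG)$ with $\cG f=\cA f$ (the expression is bounded, as $f$ and its derivatives are compactly supported in $(0,\infty)$ where $\phi$ is locally bounded), so $\EE^x f(X_t)=f(x)+\EE^x\!\int_0^t \cA f(X_s)\,\diff s$; integrating against $\ppi_0$, using invariance on the left and Fubini (justified by $|\cA f|$ being bounded) on the right gives $0=t\int \cA f\,\diff\ppi_0$ for all $t$, hence the claim. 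The only care needed is confirming $f\in\cD(\cG)$, i.e. the Dynkin-type identity with an integrable integrand, which is where one invokes the Itô/Lévy--Itô computation already recorded in the proof of Theorem~\ref{thm_infinv}.

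\textbf{Step 3: $\pi$ is the unique invariant distribution (iii).}
By Step~1 the invariant distribution is unique up to normalization, and by Step~2 it is infinitesimally invariant; by Theorem~\ref{thm_infinv} (whose hypotheses \textbf{(a1)}--\textbf{(a3)} follow from \textbf{(b1)},\textbf{(b2)}), $\ppi$ \emph{is} infinitesimally invariant. So it suffices to show the space of infinitesimally invariant finite measures is one-dimensional, equivalently that the distributional equation \eqref{eq_iile} --- which in this compound-Poisson setting reduces to $-(\phi\eeta)'-(\overline\mu_s*\eeta)'=0$, i.e. $\phi\eeta+\overline\mu_s*\eeta=\text{const}$ on $(0,\infty)$ --- has, among probability densities, only the solution $\pi$. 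Writing $\eeta(\diff x)=h(x)\,\diff x$ this is an integro-differential equation; substituting $\phi=-\overline\mu_s*\pi/\pi$ and setting $g=h/\pi$ (or $h=g\pi$) one aims to reduce it to a relation forcing $g\equiv$ const. This is where conditions \textbf{(c1)} and \textbf{(c2)} enter: \textbf{(c1)} gives $\supp\mu\subset(1/n,n)$, which localizes the convolution so that the equation becomes a delay/advance differential equation with bounded delays, solvable by a step-by-step (method-of-steps) argument marching from a neighbourhood of $0$ --- where \textbf{(b1)}'s bound $\int_0^x\pi\le C'x\pi(x)$ pins down the behaviour and kills the free constant --- outward, with the exponential tail $\pi(x)\ee^{\alpha x}\to C$ closing the argument at $+\infty$; \textbf{(c2)}, piecewise Lipschitz $\pi$, instead gives enough regularity to differentiate \eqref{eq_iile} into an a.e.\ pointwise ODE-with-memory for $g$ and run a Grönwall/uniqueness estimate on each piece, again anchored at $0$ by \textbf{(b1)}. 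Either way one concludes $\eeta=c\,\ppi$, and normalization forces $c=1$.

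\textbf{Main obstacle.}
The hard part will be Step~3, the uniqueness of the infinitesimally invariant distribution: equation \eqref{eq_iile} is a nonlocal (convolution) equation, not a plain ODE, so the usual Cauchy--Lipschitz uniqueness does not apply directly, and one must genuinely exploit either the compact support of $\mu$ (to get a finite-delay structure amenable to the method of steps) or the Lipschitz regularity of $\pi$ (to convert to an a.e.\ differential inequality), while throughout using the two boundary-type constraints from \textbf{(b1)} --- the $x\downarrow0$ growth bound and the $x\to\infty$ exponential rate --- to eliminate the one-parameter family of solutions the homogeneous equation would otherwise admit. Constructing the Lyapunov function in Step~1 with fully rigorous control of $\phi$ near both $0$ and $\infty$ (via the asymptotics of $\overline\mu_s*\pi/\pi$) is a secondary technical point but should be routine given \textbf{(b1)} and \textbf{(b2)}.
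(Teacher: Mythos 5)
Your architecture matches the paper for (ii) (Dynkin-formula argument via the extended generator) and for (iii) under \textbf{(c1)} (integrate the stationarity equation once, kill the constant by integrability, note that the convolution term vanishes on $(0,1/n]$ so Carath\'eodory applies there, then march outward by the method of steps). Step 1, however, contains two concrete errors. First, the candidate $V(x)=\ee^{\beta x}+x^{-1}$ requires $\int_0^\infty \ee^{\beta z}\mu(\diff z)<\infty$, whereas \textbf{(b2)} only gives $\int_0^\infty(1\vee z)\,\mu(\diff z)<\infty$; for $\mu(\diff z)=z^{-3}\bone_{(1,\infty)}(z)\,\diff z$ the jump integral of $\ee^{\beta x}$ is infinite for every $\beta>0$. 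The paper therefore uses a norm-like function of \emph{linear} growth at $+\infty$ (after transporting $(0,\infty)$ to $\real$ via $s(x)=\ln x$ near $0$ and $s(x)=x$ at infinity), for which the jump contribution is exactly $\int z\,\mu(\diff z)<\infty$ and is beaten by $\limsup_{x\to\infty}\phi(x)<-\int_0^\infty\overline\mu(z)\ee^{\alpha z}\diff z$, obtained from the tail asymptotics in \textbf{(b1)}. Second, your analysis near $0$ reverses the actual behaviour: \textbf{(b1)} forces $|\phi(x)|\le C'|\mu|\,x\to0$ as $x\downarrow0$ (this is precisely the point of (a2)), not $\phi(x)\to-\infty$; consequently $\phi(x)V'(x)\approx|\phi(x)|x^{-2}\le C'|\mu|/x$ is \emph{positive} (the drift pushes $X$ toward $0$ and so increases $x^{-1}$), while the jump contribution of $x^{-1}$ is of the same order $-|\mu|/x$, so the sign of the drift condition near $0$ hinges on whether $C'<1$ and is not controlled. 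The paper's logarithmic choice makes the drift term bounded near $0$ while the jump term tends to $-\infty$, which is robust. You also gloss over membership of $V$ in the domain of the extended generator, which the paper handles by localization.

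For (iii) under \textbf{(c2)} your route diverges from the paper's, and I do not think it closes. Without \textbf{(c1)} the convolution has arbitrarily small delays and, more seriously, any Gr\"onwall estimate for the integro-differential equation must be anchored at $x=0$, where the equation is singular: the relevant factor $1/|\phi(x)|\gtrsim 1/(C'|\mu|x)$ is not integrable at $0$, so the standard uniqueness argument for the Volterra-type equation does not go through. The paper sidesteps proving uniqueness of the infinitesimally invariant distribution under \textbf{(c2)} entirely: it truncates $\mu$ to $\mu^{(n)}$ supported in $(1/n,n)$, applies the \textbf{(c1)} case to obtain stationary processes $\Xn$ with law $\ppi$, proves pathwise convergence $X^{(n)}_t\to X_t$ by a piecewise Gr\"onwall argument between jump times and the discontinuities of $\phi$ (this is where piecewise Lipschitz continuity is used), concludes that $\ppi$ is \emph{invariant} for $\X$, and then gets uniqueness from positive Harris recurrence (i). You would need some such approximation detour, or a substantially stronger direct uniqueness argument, to handle \textbf{(c2)}.
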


The proof of Theorem \ref{thm_cpn} is presented in Section \ref{sec_prf}, and it is divided into several steps. The first assertion is shown by using the Foster-Lyapunov method of \cite{MTI} - \cite{MTIII}, while the second assertion is a simple application of \cite[Cor. 5.4]{behmeoechsler}. Under \textbf{(c1)} we show Theorem \ref{thm_cpn} (iii) via techniques from the theory of ordinary differential equations. If instead \textbf{(c2)} is true, we approximate \(\X\) by a sequence of processes fulfilling \textbf{(c1)} to prove the claim.

\subsection{Limiting distributions}

The natural follow-up question of Theorem \ref{thm_cpn} is whether existence and uniqueness of an invariant distribution \(\eta\) for \(\X\) implies that \(\eta\) is a limiting distribution. This property of \(\X\), i.e. the existence of a limiting distribution, is called \emph{ergodicity}.\\
As before, \(\ppi(\diff x)=\pi(x)\diff x\), \(\cE=(0,\infty)\), and \(\L\) is a spectrally positive compound Poisson process.

\begin{corollary}\label{cor_erg}
Assume \textbf{(b1)} and \textbf{(b2)} hold, and let \(\X\) be a solution of \eqref{eq_sde} with \(\phi\) as in \eqref{eq_driftcpn}. Further assume that some skeleton chain of \(\X\) is irreducible, i.e. there exits \(\Delta>0\) such that for all \(B\in\cB((0,\infty))\) with \(\lambda^{\mathrm{Leb}}(B)>0\) and all \(x\in(0,\infty)\) there exists \(n\in\nat\) such that
\begin{align*}
\PP^x(X_{n\Delta}\in B)>0.
\end{align*} 
Then \(\X\) is \emph{ergodic}.
\end{corollary}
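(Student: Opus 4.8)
The plan is to transfer total-variation convergence from a skeleton chain to continuous time. First I would record that, by Theorem~\ref{thm_cpn}(i), the process \(\X\) is positive Harris recurrent; write \(\eta\) for its unique invariant distribution, a probability measure on \((0,\infty)=\cO\). Writing \(P^t(x,\cdot):=\PP^x(X_t\in\cdot)\), the hypothesis of the corollary is exactly the statement that the \(\Delta\)-skeleton \(\Phi:=(X_{n\Delta})_{n\in\nat_0}\) is \(\psi\)-irreducible with irreducibility measure \(\psi=\lambda^{\mathrm{Leb}}|_{(0,\infty)}\). The goal is then to show that \(\Phi\) is positive Harris recurrent and aperiodic, apply the ergodic theorem for such chains, and interpolate.

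For the skeleton: since \(\X\) is Harris recurrent and \(\Phi\) is irreducible, \(\Phi\) inherits Harris recurrence (a skeleton of a Harris recurrent process is Harris recurrent once it is irreducible, cf.\ \cite{MTII,MTIII}); and because \(\eta\) is invariant for the whole semigroup \((P^t)_{t\ge0}\), it is invariant for \(P^\Delta\), so \(\Phi\) is positive Harris recurrent with invariant distribution \(\eta\). The only delicate point is aperiodicity of \(\Phi\). Here I would exploit that, as \(\L\) is a compound Poisson process, \(\X\) is a piecewise deterministic Markov process: between jumps it follows the autonomous ODE \(\dot x=\phi(x)\) with \(\phi<0\), and the (positive) jumps arrive at rate \(|\mu|\). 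Conditioning on the event ``exactly one jump in \([0,\Delta]\)'', which has probability \(|\mu|\Delta e^{-|\mu|\Delta}>0\), the jump time is uniform on \((0,\Delta)\); pushing this uniform time forward through the strictly monotone flow smears the value of \(X_\Delta\) over a genuine interval, which — after averaging over the jump size with respect to \(\mu/|\mu|\) — yields, on a suitable compact interval \(C\subset(0,\infty)\), a one-step minorization \(P^\Delta(x,\cdot)\ge\delta\,\lambda^{\mathrm{Leb}}|_{I}\) for all \(x\in C\), with \(I\) an interval and \(\delta>0\). A one-step minorization on a (\(\psi\)-positive) small set forces the period of \(\Phi\) to divide \(1\); in any case, aperiodicity of an irreducible skeleton of a positive Harris process is part of the continuous-time ergodic theory of \cite{MTII,MTIII}.

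With \(\Phi\) positive Harris recurrent and aperiodic, the aperiodic ergodic theorem for Markov chains (see e.g.\ \cite{MTI,MTIII}) gives \(\|P^{n\Delta}(x,\cdot)-\eta\|_{\mathrm{TV}}\to0\) as \(n\to\infty\) for every \(x\in(0,\infty)\). To pass to continuous time, write \(t=n\Delta+s\) with \(n=\lfloor t/\Delta\rfloor\) and \(s\in[0,\Delta)\); since \(\eta P^s=\eta\) and \(P^s\) is a contraction on signed measures in total variation,
\begin{align*}
\|P^t(x,\cdot)-\eta\|_{\mathrm{TV}}=\bigl\|\bigl(P^{n\Delta}(x,\cdot)-\eta\bigr)P^s\bigr\|_{\mathrm{TV}}\le\|P^{n\Delta}(x,\cdot)-\eta\|_{\mathrm{TV}},
\end{align*}
and the right-hand side tends to \(0\) as \(t\to\infty\). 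Hence \(\eta\) is limiting for \(\X\) from every starting point, i.e.\ \(\X\) is ergodic.

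The main obstacle is the aperiodicity step: the minorization has to be produced uniformly over a small set, and it cannot be extracted from an absolutely continuous part of \(\mu\) (which is allowed to be purely atomic), so all of the smoothing must come from the random jump time together with the flow of \(\phi\). Moreover, since \textbf{(c1)} and \textbf{(c2)} are not assumed in the corollary, \(\phi\) — and hence this flow — is only mildly regular, so some care is needed to justify that the push-forward of the uniform jump time is genuinely spread out over an interval; alternatively one simply invokes the packaged continuous-time ergodicity result of \cite{MTII,MTIII}, for which the two ingredients Theorem~\ref{thm_cpn}(i) and the assumed skeleton irreducibility are precisely what is required.
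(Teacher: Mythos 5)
Your proposal is correct and, in its essence, is exactly the paper's proof: the paper disposes of the corollary in one line by combining Theorem \ref{thm_cpn}~(i) with the packaged continuous-time ergodic theorem \cite[Thm.~6.1]{MTII}, which is precisely the fallback you identify at the end (positive Harris recurrence plus an irreducible skeleton implies ergodicity). Your hand-rolled unpacking of that theorem --- skeleton positivity, aperiodicity via a minorization, and the total-variation interpolation --- is the content of the cited result rather than a different route, and the delicate aperiodicity step you flag is indeed best left to the citation.
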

\begin{proof}
Follows directly from Theorem \ref{thm_cpn} (i) and \cite[Thm. 6.1]{MTII}.
\end{proof}

\begin{lemma}[Irreducible skeleton chain]\label{lem_full}
Assume \textbf{(b1)} and \textbf{(b2)} hold, and let \(\X\) be a solution of \eqref{eq_sde} with \(\phi\) as in \eqref{eq_driftcpn}. Additionally assume that \(\mu=\mu_1+\mu_2\) where \(\mu_1\) is arbitrary and \(\mu_2\) is absolutely continuous and such that \(\mu_2(I)>0\) for all open intervals \(I\subset(0,\infty)\). Then the 1-skeleton chain is irreducible, and \(\X\) is ergodic.
\end{lemma}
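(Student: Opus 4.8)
The plan is to verify directly the hypothesis of Corollary~\ref{cor_erg} with $\Delta=1$, i.e.\ that for every $x\in\cE=(0,\infty)$ and every Borel set $B\subset(0,\infty)$ with $\lambda^{\mathrm{Leb}}(B)>0$ there is $n\in\nat$ with $\PP^x(X_n\in B)>0$; ergodicity of $\X$ then follows at once from Corollary~\ref{cor_erg}. The first ingredient is an explicit description of the deterministic motion of $\X$ between jumps. Since $\L$ is a spectrally positive compound Poisson process and, by \eqref{eq_driftcpn}, $\phi<0$ throughout $(0,\infty)$, between two consecutive jump times $\X$ follows the autonomous equation $\dot y=\phi(y)$. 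Put $G(x):=\int_1^x\frac{\pi(z)}{\overline\mu_s*\pi(z)}\,\diff z$. Using \textbf{(b1)} and \textbf{(b2)} one checks that $\overline\mu_s*\pi$ is continuous and strictly positive on $(0,\infty)$ and that $\pi$ is locally bounded and locally bounded away from $0$ there, so $\frac{\pi}{\overline\mu_s*\pi}\in L^1_{\loc}(0,\infty)$ and $G\colon(0,\infty)\to(-\infty,G(\infty))$ is a continuous strictly increasing bijection with $G(0^+)=-\infty$ by \eqref{eq_subor}. Consequently the flow is given by $\psi_t(x)=G^{-1}(G(x)-t)$: it is well defined and strictly decreasing for all $t\geq0$, satisfies $\psi_t(x)\downarrow0$ as $t\to\infty$, and for fixed $t>0$ the map $x\mapsto\psi_t(x)$ is an increasing homeomorphism of $(0,\infty)$ onto $(0,c_t)$ with $c_t:=G^{-1}(G(\infty)-t)\to\infty$ as $t\downarrow0$, whose inverse $y\mapsto G^{-1}(G(y)+t)$ is locally Lipschitz.

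The core step is the one-step estimate: for every $x$ and every Borel $B$ with $\lambda^{\mathrm{Leb}}\big(B\cap(\psi_1(x),\infty)\big)>0$ one has $\PP^x(X_1\in B)>0$. To see it, decompose the compound Poisson process $\L$ into the sum of two independent spectrally positive compound Poisson processes with Lévy measures $\mu_1$ and $\mu_2$, and work on the event $E$ that the first has no jump in $[0,1]$ while the second has exactly one, at time $S$ and of size $\xi$; on $E$ the pair $(S,\xi)$ is absolutely continuous, with $S$ uniform on $(0,1)$ and $\xi$ distributed according to $\mu_2/|\mu_2|$, whose density is, by the assumption on $\mu_2$, positive on every subinterval of $(0,\infty)$, and $X_1=\psi_{1-S}\big(\psi_S(x)+\xi\big)$. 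Fixing $s\in(0,1)$, the map $z\mapsto\psi_{1-s}\big(\psi_s(x)+z\big)$ is, by the first step, an increasing, locally bi-Lipschitz homeomorphism of $(0,\infty)$ onto $(\psi_1(x),c_{1-s})$; equivalently, the jump size needed to reach a target $y$ from time $s$ is the single value $z_s(y)=G^{-1}\big(G(y)+1-s\big)-G^{-1}\big(G(x)-s\big)$, which is $>0$ precisely when $y>\psi_1(x)$. Hence, conditionally on $E\cap\{S=s\}$, the law of $X_1$ is absolutely continuous on $(\psi_1(x),c_{1-s})$; integrating over $s$ — whose uniform law supplies the additional randomisation — one obtains that the law of $X_1$ on $E$ has a Lebesgue density that is strictly positive at every $y>\psi_1(x)$: as $s\uparrow1$ one has $z_s(y)\to y-\psi_1(x)>0$ and $c_{1-s}\to\infty$, so the curve $s\mapsto z_s(y)$ sweeps out a nondegenerate interval on which the density of $\xi$ is not Lebesgue-almost-everywhere zero, whence the $s$-integral defining the density of $X_1\mid E$ at $y$ is strictly positive. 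This gives $\PP^x(X_1\in B)\geq\PP^x(\{X_1\in B\}\cap E)>0$ for $B$ as above.

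To globalise, take $x\in(0,\infty)$ and $B$ with $\lambda^{\mathrm{Leb}}(B)>0$; choose $\epsilon>0$ with $\lambda^{\mathrm{Leb}}\big(B\cap(\epsilon,\infty)\big)>0$ and then $n\in\nat$ with $\psi_{n+1}(x)<\epsilon$, which is possible because $\psi_t(x)\downarrow0$. On the event that $\L$ has no jump in $[0,n]$, of probability $\ee^{-n|\mu|}>0$, one has $X_n=\psi_n(x)$; since $\psi_1\big(\psi_n(x)\big)=\psi_{n+1}(x)<\epsilon$, the set $B\cap\big(\psi_{n+1}(x),\infty\big)$ contains $B\cap(\epsilon,\infty)$ and so has positive Lebesgue measure, and the Markov property at time $n$ together with the one-step estimate yields $\PP^x(X_{n+1}\in B)\geq\ee^{-n|\mu|}\,\PP^{\psi_n(x)}(X_1\in B)>0$. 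Thus the $1$-skeleton chain of $\X$ is irreducible with respect to Lebesgue measure on $(0,\infty)$, and Corollary~\ref{cor_erg} shows that $\X$ is ergodic.

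The main obstacle is the one-step density estimate of the second paragraph — precisely, establishing rigorously that after integrating out the jump time the law of $X_1$ on $E$ is absolutely continuous with a density that is positive on all of $(\psi_1(x),\infty)$. The delicate points are that $\phi$ need only be piecewise continuous (it may jump at the partition points of $\pi$), so the flow must be handled through the explicit formula $\psi_t=G^{-1}\big(G(\cdot)-t\big)$ rather than as a classical $C^1$ flow, and that one must control the non-degeneracy of the change of variables $z\leftrightarrow y$ and the transfer through it of the positivity of the jump density $\diff\mu_2/\diff z$; once the flow is described this way, the remaining verifications are routine.
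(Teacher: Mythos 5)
Your strategy coincides with the paper's: reduce to a one--step estimate by letting the process drift down on the no--jump event, then condition on exactly one jump (drawn from the absolutely continuous part \(\mu_2\)) in \([0,1]\) and use the joint randomness of the uniform jump time \(S\) and the jump size \(\xi\) to produce a density for \(X_1\) on the reachable region \((\psi_1(x),\infty)\). Your explicit description of the flow via \(G\) and \(\psi_t=G^{-1}(G(\cdot)-t)\), and the resulting change of variables \(z\leftrightarrow y\), is in fact a more honest justification of the absolute continuity of the law of \(X_1\) than the paper's appeal to the absence of atoms, and your splitting of \(\L\) into independent \(\mu_1\)-- and \(\mu_2\)--parts is the rigorous version of the paper's ``without loss of generality \(\mu_1=0\)''.

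The step that is not closed --- and which you yourself flag as the main obstacle --- is the positivity of the integrated density at a fixed target \(y>\psi_1(x)\). You assert that \(s\mapsto z_s(y)\) ``sweeps out a nondegenerate interval'', but its derivative equals \(\phi(w_s)-\phi(\psi_s(x))\) with \(w_s:=G^{-1}(G(y)+1-s)\), and since \(\phi\) is not injective this difference can vanish on a set of positive measure, so the sweep may be degenerate. Moreover, even where the sweep is nondegenerate, the hypothesis \(\mu_2(I)>0\) for all open intervals only guarantees that the density of \(\xi\) is positive on a subset of positive measure of each interval, not at the particular values \(z_s(y)\); you therefore need a genuine change of variables in \(s\) on a non--null set where \(\partial_s z_s(y)\neq0\), and this non--degeneracy is exactly what is missing. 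The paper circumvents the pointwise issue by proving strict monotonicity of the joint distribution function of \((T,Y_1)\), i.e.\ by targeting intervals \((y,y']\) rather than points: for each fixed jump time the preimage of \((y,y']\) under the strictly increasing, continuous jump-size-to-endpoint map is automatically a nondegenerate interval \(I'\), which \(\mu_2\) charges by hypothesis, and no control of \(\partial_s z_s\) is needed. Replacing your pointwise argument by this interval argument, combined with the absolute continuity you did establish, is the natural repair; as written, \(\PP^x(X_1\in B)>0\) for a general positive-measure \(B\) is not fully proved. (To be fair, the paper's own passage from interval positivity plus ``no atoms'' to a \emph{strictly positive} joint density is also somewhat terse, so the two arguments end up with comparable residual delicacy at this final step.)
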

\begin{proof}
Without loss of generality we assume \(\mu_1=0\) since otherwise we may simply condition on the event that the jumps are only sampled from \(\mu_2\). \\
Let \(B\in\cB((0,\infty))\) with \(\lambda^{\mathrm{Leb}}(B)>0\). Our goal is to show that for all \(x\in(0,\infty)\) there exists \(n\in\nat\) such that  
\begin{align}\label{eq_ir}
\PP^x(X_{n}\in B)>0.
\end{align} 
It suffices to show \eqref{eq_ir} only for sets \(B\) for which \(\inf B>0\) since for arbitrary \(B\in\cB(0,\infty)\) with \(\lambda^{\mathrm{Leb}}(B)>0\) there exist \(0<a<b\) such that \(\lambda^{\mathrm{Leb}}(B\cap(a,b))>0\).
Moreover, it also suffices to just consider \(x<\inf B\) and \(n=1\). This is due to the fact that for arbitrary \(x\in(0,\infty)\) and \(m\in\nat\) we obtain
\begin{align*}
\PP^x(X_{m}\in B)\geq \PP^x(X_m\in B, N_{m-1}=0),
\end{align*}
where we recall that \(\N\) is the Poisson process counting the jumps of \(\L\), and therefore also of \(\X\).\\
As \(\phi(x)<0\) for all \(x>0\), we may choose \(m\) large enough such that \(X^x_{m-1}<\inf B\) on \(\{N_{m-1}=0\}\), and consider \(X^x_{m-1}\) as a new starting point.\\
Thus, let \(0<x<\inf B\). In the following we condition on the event that exactly one jump occurs until \(t=1\). Denote \(Y_t:=\left(X^x_t\big|N_1=1\right)\). It holds
\begin{align*}
\PP^x(X_1\in B)\geq c\PP^x(Y_1\in B)
\end{align*}
for some \(c>0\). Further, denote by \(T\in(0,1)\) the uniformly distributed time of the jump. We show that the joint cumulative distribution function
\begin{align*}
\PP(T\leq t, Y_1\leq y)
\end{align*}
is strictly monotone on \((0,1)\times (x,\infty)\) in both arguments. Let \(0<t<t'<1\) and \(y\in(x,\infty)\). We obtain
\begin{align*}
\PP(T\in(t,t'], Y_1\leq y)\geq \PP(T\in(t,t'], \xi_1\leq x-Y_{t-}) >0.
\end{align*}
Indeed, if \(T\in(t,t']\) and additionally \(\xi_1\leq x-Y_{t-}\), then \(Y_1\leq x<y\), and since \(Y_{t-}<x\) we get \(\PP(T\in(t,t'],\xi_1\leq x-Y_{t-})>0\).\\
 Now, let \(t\in(0,1)\) and \(x<y<y'<\infty\). We note that for every \(t\in(0,1)\) there exists some interval \(I\subset (0,\infty)\) such that \(Y_1\in(y,y']\) if \(T=t\) and \(\xi_1\in I\). This is due to the fact that the paths of \(\X\) between two jumps are continuous and strictly decreasing. Moreover, since \(Y_1\) depends continuously on \(T\) and \(\xi_1\), there exists \(\varepsilon>0\) and an interval \(I'\subset(0,\infty)\) such that \(Y_1\in(y,y']\) if \(T\in(t-\varepsilon,t]\) and \(\xi_1\in I'\). Thus,
\begin{align*}
\PP(T\leq t, Y_1\in(y,y']) \geq\PP(T\in(t-\varepsilon,t],Y_1\in(y,y'])\geq \varepsilon\mu(I')>0
\end{align*}
by the assumption on \(\mu\).\\
As both \(T\) and \(Y_1\) have clearly no atoms in \((0,1)\) and \((x,\infty)\), respectively, there exists a joint density function \(f_{(T,Y_1)}\) of \((T,Y_1)\) on \((0,1)\times(x,\infty)\) which is strictly positive. Hence
\begin{align*}
\PP^x(X_1\in B)\geq c\PP^x(T\in(0,1),Y_1\in B)=\int_{(0,1)\times B} f_{(T,Y_1)}(t,y)\diff t\diff y >0.
\end{align*}
This, together with Corollary \ref{cor_erg}, concludes the proof.
\end{proof}

\subsection{Examples}\label{sec_exam}
In this section we illustrate Theorem \ref{thm_cpn} on various examples by sampling \eqref{eq_sde}. To this end, we first compute a realization of the path of the driving noise \(\L\). With \(\L\) being a compound Poisson process this is straight-forward. It remains to solve a (deterministic) differential equation which is then done via the classic Euler method.

\noindent
\begin{figure}
\begin{tikzpicture}
\begin{axis}[
	  height=8em,
 	  width=21em,
 	  area style,
      tick label style={font=\tiny},
 	  ytick={0,1},
	  xmin=0,
	  xmax=10,
	  ymin=0,
	  ymax=1]
		\addplot[line width=2, color=black] coordinates{( 0.01 , 0.0 )
( 0.06 , 0.0 )
( 0.11 , 0.0 )
( 0.16 , 1e-05 )
( 0.21 , 3e-05 )
( 0.26 , 8e-05 )
( 0.31 , 0.0002 )
( 0.36 , 0.00046 )
( 0.41 , 0.00099 )
( 0.46 , 0.00204 )
( 0.51 , 0.00398 )
( 0.56 , 0.00742 )
( 0.61 , 0.0132 )
( 0.66 , 0.02245 )
( 0.71 , 0.03654 )
( 0.76 , 0.05701 )
( 0.81 , 0.08538 )
( 0.86 , 0.12287 )
( 0.91 , 0.17015 )
( 0.96 , 0.22699 )
( 1.01 , 0.29208 )
( 1.06 , 0.36296 )
( 1.11 , 0.4361 )
( 1.16 , 0.50723 )
( 1.21 , 0.57175 )
( 1.26 , 0.62531 )
( 1.31 , 0.66429 )
( 1.36 , 0.68625 )
( 1.41 , 0.69016 )
( 1.46 , 0.67643 )
( 1.51 , 0.64681 )
( 1.56 , 0.60404 )
( 1.61 , 0.55149 )
( 1.66 , 0.49277 )
( 1.71 , 0.43134 )
( 1.76 , 0.37026 )
( 1.81 , 0.31197 )
( 1.86 , 0.25827 )
( 1.91 , 0.21028 )
( 1.96 , 0.16854 )
( 2.01 , 0.1331 )
( 2.06 , 0.10366 )
( 2.11 , 0.07968 )
( 2.16 , 0.06052 )
( 2.21 , 0.04544 )
( 2.26 , 0.03377 )
( 2.31 , 0.02486 )
( 2.36 , 0.01814 )
( 2.41 , 0.01313 )
( 2.46 , 0.00943 )
( 2.51 , 0.00674 )
( 2.56 , 0.00478 )
( 2.61 , 0.00338 )
( 2.66 , 0.00238 )
( 2.71 , 0.00167 )
( 2.76 , 0.00117 )
( 2.81 , 0.00081 )
( 2.86 , 0.00057 )
( 2.91 , 0.00039 )
( 2.96 , 0.00027 )
( 3.01 , 0.00019 )
( 3.06 , 0.00013 )
( 3.11 , 9e-05 )
( 3.16 , 7e-05 )
( 3.21 , 5e-05 )
( 3.26 , 3e-05 )
( 3.31 , 2e-05 )
( 3.36 , 2e-05 )
( 3.41 , 1e-05 )
( 3.46 , 1e-05 )
( 3.51 , 1e-05 )
( 3.56 , 0.0 )
( 3.61 , 0.0 )
( 3.66 , 0.0 )
( 3.71 , 0.0 )
( 3.76 , 0.0 )
( 3.81 , 0.0 )
( 3.86 , 0.0 )
( 3.91 , 0.0 )
( 3.96 , 0.0 )
( 4.01 , 0.0 )
( 4.06 , 0.0 )
( 4.11 , 0.0 )
( 4.16 , 0.0 )
( 4.21 , 0.0 )
( 4.26 , 0.0 )
( 4.31 , 0.0 )
( 4.36 , 0.0 )
( 4.41 , 0.0 )
( 4.46 , 0.0 )
( 4.51 , 0.0 )
( 4.56 , 0.0 )
( 4.61 , 0.0 )
( 4.66 , 0.0 )
( 4.71 , 0.0 )
( 4.76 , 0.0 )
( 4.81 , 0.0 )
( 4.86 , 0.0 )
( 4.91 , 0.0 )
( 4.96 , 0.0 )
( 5.01 , 0.0 )
( 5.06 , 0.0 )
( 5.11 , 0.0 )
( 5.16 , 0.0 )
( 5.21 , 0.0 )
( 5.26 , 0.0 )
( 5.31 , 0.0 )
( 5.36 , 0.0 )
( 5.41 , 0.0 )
( 5.46 , 0.0 )
( 5.51 , 0.0 )
( 5.56 , 0.0 )
( 5.61 , 0.0 )
( 5.66 , 0.0 )
( 5.71 , 0.0 )
( 5.76 , 0.0 )
( 5.81 , 0.0 )
( 5.86 , 0.0 )
( 5.91 , 0.0 )
( 5.96 , 0.0 )
( 6.01 , 0.0 )
( 6.06 , 0.0 )
( 6.11 , 0.0 )
( 6.16 , 0.0 )
( 6.21 , 0.0 )
( 6.26 , 0.0 )
( 6.31 , 0.0 )
( 6.36 , 0.0 )
( 6.41 , 0.0 )
( 6.46 , 0.0 )
( 6.51 , 1e-05 )
( 6.56 , 1e-05 )
( 6.61 , 1e-05 )
( 6.66 , 2e-05 )
( 6.71 , 2e-05 )
( 6.76 , 3e-05 )
( 6.81 , 4e-05 )
( 6.86 , 6e-05 )
( 6.91 , 9e-05 )
( 6.96 , 0.00013 )
( 7.01 , 0.00019 )
( 7.06 , 0.00027 )
( 7.11 , 0.00038 )
( 7.16 , 0.00055 )
( 7.21 , 0.00079 )
( 7.26 , 0.00112 )
( 7.31 , 0.0016 )
( 7.36 , 0.00228 )
( 7.41 , 0.00323 )
( 7.46 , 0.00456 )
( 7.51 , 0.00639 )
( 7.56 , 0.00892 )
( 7.61 , 0.01237 )
( 7.66 , 0.01702 )
( 7.71 , 0.02323 )
( 7.76 , 0.03143 )
( 7.81 , 0.0421 )
( 7.86 , 0.05581 )
( 7.91 , 0.07314 )
( 7.96 , 0.09467 )
( 8.01 , 0.12094 )
( 8.06 , 0.15234 )
( 8.11 , 0.18904 )
( 8.16 , 0.23087 )
( 8.21 , 0.27725 )
( 8.26 , 0.32706 )
( 8.31 , 0.37865 )
( 8.36 , 0.4298 )
( 8.41 , 0.47784 )
( 8.46 , 0.51981 )
( 8.51 , 0.55271 )
( 8.56 , 0.57385 )
( 8.61 , 0.58113 )
( 8.66 , 0.57342 )
( 8.71 , 0.55071 )
( 8.76 , 0.5142 )
( 8.81 , 0.46625 )
( 8.86 , 0.4101 )
( 8.91 , 0.34951 )
( 8.96 , 0.28827 )
( 9.01 , 0.22983 )
( 9.06 , 0.17692 )
( 9.11 , 0.13133 )
( 9.16 , 0.0939 )
( 9.21 , 0.06458 )
( 9.26 , 0.04267 )
( 9.31 , 0.02706 )
( 9.36 , 0.01644 )
( 9.41 , 0.00956 )
( 9.46 , 0.00531 )
( 9.51 , 0.00282 )
( 9.56 , 0.00143 )
( 9.61 , 0.00069 )
( 9.66 , 0.00031 )
( 9.71 , 0.00014 )
( 9.76 , 6e-05 )
( 9.81 , 2e-05 )
( 9.86 , 1e-05 )
( 9.91 , 0.0 )
( 9.96 , 0.0 )};
        \end{axis}
        \end{tikzpicture}\begin{tikzpicture}
\begin{axis}[
	  height=8em,
 	  width=21em,
 	  area style,
      tick label style={font=\tiny},
 	  ytick={0,0.5,1},
	  xmin=0,
	  xmax=10,
	  ymin=0,
	  ymax=0.5]
		\addplot[line width=2, color=black] coordinates{( 0.01 , 0.249 )
( 0.06 , 0.243 )
( 0.11 , 0.237 )
( 0.16 , 0.231 )
( 0.21 , 0.225 )
( 0.26 , 0.22 )
( 0.31 , 0.214 )
( 0.36 , 0.209 )
( 0.41 , 0.204 )
( 0.46 , 0.199 )
( 0.51 , 0.194 )
( 0.56 , 0.189 )
( 0.61 , 0.184 )
( 0.66 , 0.18 )
( 0.71 , 0.175 )
( 0.76 , 0.171 )
( 0.81 , 0.167 )
( 0.86 , 0.163 )
( 0.91 , 0.159 )
( 0.96 , 0.155 )
( 1.01 , 0.151 )
( 1.06 , 0.147 )
( 1.11 , 0.144 )
( 1.16 , 0.14 )
( 1.21 , 0.137 )
( 1.26 , 0.133 )
( 1.31 , 0.13 )
( 1.36 , 0.127 )
( 1.41 , 0.124 )
( 1.46 , 0.121 )
( 1.51 , 0.118 )
( 1.56 , 0.115 )
( 1.61 , 0.112 )
( 1.66 , 0.109 )
( 1.71 , 0.106 )
( 1.76 , 0.104 )
( 1.81 , 0.101 )
( 1.86 , 0.099 )
( 1.91 , 0.096 )
( 1.96 , 0.094 )
};
		\addplot[line width=2, color=black] coordinates{( 2.01 , 0.342 )
( 2.06 , 0.339 )
( 2.11 , 0.337 )
( 2.16 , 0.335 )
( 2.21 , 0.333 )
( 2.26 , 0.331 )
( 2.31 , 0.329 )
( 2.36 , 0.327 )
( 2.41 , 0.325 )
( 2.46 , 0.323 )
( 2.51 , 0.321 )
( 2.56 , 0.32 )
( 2.61 , 0.318 )
( 2.66 , 0.316 )
( 2.71 , 0.315 )
( 2.76 , 0.313 )
( 2.81 , 0.312 )
( 2.86 , 0.31 )
( 2.91 , 0.309 )
( 2.96 , 0.307 )
( 3.01 , 0.306 )
( 3.06 , 0.304 )
( 3.11 , 0.303 )
( 3.16 , 0.302 )
( 3.21 , 0.3 )
( 3.26 , 0.299 )
( 3.31 , 0.298 )
( 3.36 , 0.297 )
( 3.41 , 0.296 )
( 3.46 , 0.295 )
( 3.51 , 0.293 )
( 3.56 , 0.292 )
( 3.61 , 0.291 )
( 3.66 , 0.29 )
( 3.71 , 0.289 )
( 3.76 , 0.288 )
( 3.81 , 0.287 )
( 3.86 , 0.286 )
( 3.91 , 0.286 )
( 3.96 , 0.285 )};
		\addplot[line width=2, color=black] coordinates{
( 4.01 , 0.034 )
( 4.06 , 0.033 )
( 4.11 , 0.032 )
( 4.16 , 0.031 )
( 4.21 , 0.03 )
( 4.26 , 0.03 )
( 4.31 , 0.029 )
( 4.36 , 0.028 )
( 4.41 , 0.028 )
( 4.46 , 0.027 )
( 4.51 , 0.026 )
( 4.56 , 0.026 )
( 4.61 , 0.025 )
( 4.66 , 0.024 )
( 4.71 , 0.024 )
( 4.76 , 0.023 )
( 4.81 , 0.023 )
( 4.86 , 0.022 )
( 4.91 , 0.021 )
( 4.96 , 0.021 )
( 5.01 , 0.02 )
( 5.06 , 0.02 )
( 5.11 , 0.019 )
( 5.16 , 0.019 )
( 5.21 , 0.018 )
( 5.26 , 0.018 )
( 5.31 , 0.018 )
( 5.36 , 0.017 )
( 5.41 , 0.017 )
( 5.46 , 0.016 )
( 5.51 , 0.016 )
( 5.56 , 0.016 )
( 5.61 , 0.015 )
( 5.66 , 0.015 )
( 5.71 , 0.014 )
( 5.76 , 0.014 )
( 5.81 , 0.014 )
( 5.86 , 0.013 )
( 5.91 , 0.013 )
( 5.96 , 0.013 )
( 6.01 , 0.012 )
( 6.06 , 0.012 )
( 6.11 , 0.012 )
( 6.16 , 0.011 )
( 6.21 , 0.011 )
( 6.26 , 0.011 )
( 6.31 , 0.011 )
( 6.36 , 0.01 )
( 6.41 , 0.01 )
( 6.46 , 0.01 )
( 6.51 , 0.01 )
( 6.56 , 0.009 )
( 6.61 , 0.009 )
( 6.66 , 0.009 )
( 6.71 , 0.009 )
( 6.76 , 0.009 )
( 6.81 , 0.008 )
( 6.86 , 0.008 )
( 6.91 , 0.008 )
( 6.96 , 0.008 )
( 7.01 , 0.008 )
( 7.06 , 0.007 )
( 7.11 , 0.007 )
( 7.16 , 0.007 )
( 7.21 , 0.007 )
( 7.26 , 0.007 )
( 7.31 , 0.006 )
( 7.36 , 0.006 )
( 7.41 , 0.006 )
( 7.46 , 0.006 )
( 7.51 , 0.006 )
( 7.56 , 0.006 )
( 7.61 , 0.006 )
( 7.66 , 0.005 )
( 7.71 , 0.005 )
( 7.76 , 0.005 )
( 7.81 , 0.005 )
( 7.86 , 0.005 )
( 7.91 , 0.005 )
( 7.96 , 0.005 )
( 8.01 , 0.005 )
( 8.06 , 0.004 )
( 8.11 , 0.004 )
( 8.16 , 0.004 )
( 8.21 , 0.004 )
( 8.26 , 0.004 )
( 8.31 , 0.004 )
( 8.36 , 0.004 )
( 8.41 , 0.004 )
( 8.46 , 0.004 )
( 8.51 , 0.004 )
( 8.56 , 0.003 )
( 8.61 , 0.003 )
( 8.66 , 0.003 )
( 8.71 , 0.003 )
( 8.76 , 0.003 )
( 8.81 , 0.003 )
( 8.86 , 0.003 )
( 8.91 , 0.003 )
( 8.96 , 0.003 )
( 9.01 , 0.003 )
( 9.06 , 0.003 )
( 9.11 , 0.003 )
( 9.16 , 0.003 )
( 9.21 , 0.003 )
( 9.26 , 0.002 )
( 9.31 , 0.002 )
( 9.36 , 0.002 )
( 9.41 , 0.002 )
( 9.46 , 0.002 )
( 9.51 , 0.002 )
( 9.56 , 0.002 )
( 9.61 , 0.002 )
( 9.66 , 0.002 )
( 9.71 , 0.002 )
( 9.76 , 0.002 )
( 9.81 , 0.002 )
( 9.86 , 0.002 )
( 9.91 , 0.002 )
( 9.96 , 0.002 )};
        \end{axis}
        \end{tikzpicture}\\
        \begin{tikzpicture}
      \begin{axis}[
	  height=8em,
 	  width=21em,
 	  area style,
      tick label style={font=\tiny},
 	  ytick={0,1},
	  xmin=0,
	  xmax=10,
	  ymin=0,
	  ymax=1]
	  \addplot[ ybar interval,
	  fill,
                color=black!70
                ] coordinates {( 0.569 , 0.00307 )
( 0.646 , 0.01554 )
( 0.722 , 0.03742 )
( 0.798 , 0.07025 )
( 0.875 , 0.12626 )
( 0.951 , 0.21291 )
( 1.027 , 0.32749 )
( 1.104 , 0.44942 )
( 1.18 , 0.55361 )
( 1.256 , 0.64205 )
( 1.333 , 0.70278 )
( 1.409 , 0.71815 )
( 1.485 , 0.67879 )
( 1.561 , 0.61649 )
( 1.638 , 0.52332 )
( 1.714 , 0.43268 )
( 1.79 , 0.34455 )
( 1.867 , 0.26181 )
( 1.943 , 0.17174 )
( 2.019 , 0.12439 )
( 2.096 , 0.0744 )
( 2.172 , 0.07395 )
( 2.248 , 0.02504 )
( 2.325 , 0.00021 )
( 2.401 , 0.00021 )
( 2.477 , 0.00465 )
( 2.554 , 0.00507 )
( 2.63 , 0.00021 )
( 2.706 , 0.00024 )
( 2.783 , 0.0001 )
( 2.859 , 0.00068 )
( 2.935 , 8e-05 )
( 3.012 , 8e-05 )
( 3.088 , 0.00018 )
( 3.164 , 8e-05 )
( 3.241 , 0.0001 )
( 3.317 , 5e-05 )
( 3.393 , 3e-05 )
( 3.469 , 8e-05 )
( 3.546 , 8e-05 )
( 3.622 , 0.00021 )
( 3.698 , 3e-05 )
( 3.775 , 5e-05 )
( 3.851 , 3e-05 )
( 3.927 , 0.0001 )
( 4.004 , 5e-05 )
( 4.08 , 5e-05 )
( 4.156 , 3e-05 )
( 4.233 , 0.0 )
( 4.309 , 5e-05 )
( 4.385 , 3e-05 )
( 4.462 , 3e-05 )
( 4.538 , 5e-05 )
( 4.614 , 0.0 )
( 4.691 , 5e-05 )
( 4.767 , 0.0001 )
( 4.843 , 0.00021 )
( 4.92 , 0.00031 )
( 4.996 , 0.00029 )
( 5.072 , 0.00034 )
( 5.149 , 0.0006 )
( 5.225 , 0.00063 )
( 5.301 , 0.00089 )
( 5.377 , 0.00068 )
( 5.454 , 0.00092 )
( 5.53 , 0.00076 )
( 5.606 , 0.00047 )
( 5.683 , 0.00039 )
( 5.759 , 0.00037 )
( 5.835 , 0.00021 )
( 5.912 , 0.00026 )
( 5.988 , 0.00021 )
( 6.064 , 8e-05 )
( 6.141 , 5e-05 )
( 6.217 , 0.0 )
( 6.293 , 8e-05 )
( 6.37 , 8e-05 )
( 6.446 , 0.0001 )
( 6.522 , 5e-05 )
( 6.599 , 0.00073 )
( 6.675 , 0.00076 )
( 6.751 , 0.00102 )
( 6.828 , 0.0011 )
( 6.904 , 0.0015 )
( 6.98 , 0.00234 )
( 7.057 , 0.0026 )
( 7.133 , 0.00144 )
( 7.209 , 0.00249 )
( 7.286 , 0.00378 )
( 7.362 , 0.00496 )
( 7.438 , 0.00588 )
( 7.514 , 0.01018 )
( 7.591 , 0.01404 )
( 7.667 , 0.01921 )
( 7.743 , 0.03265 )
( 7.82 , 0.05094 )
( 7.896 , 0.07002 )
( 7.972 , 0.1072 )
( 8.049 , 0.15355 )
( 8.125 , 0.20748 )
( 8.201 , 0.28151 )
( 8.278 , 0.3592 )
( 8.354 , 0.44139 )
( 8.43 , 0.51338 )
( 8.507 , 0.57122 )
( 8.583 , 0.601 )
( 8.659 , 0.5857 )
( 8.736 , 0.52574 )
( 8.812 , 0.44588 )
( 8.888 , 0.34623 )
( 8.965 , 0.24278 )
( 9.041 , 0.15352 )
( 9.117 , 0.08679 )
( 9.194 , 0.0333 )
( 9.27 , 0.00194 )
( 9.346 , 0.00171 )
( 9.422 , 0.00192 )
( 9.499 , 0.00134 )
( 9.575 , 0.00139 )
( 9.651 , 0.00108 )
( 9.728 , 0.00087 )
( 9.804 , 0.00068 )
( 9.88 , 0.00042 )
( 9.957 , 0.00058 )
( 10.033 , 0.00029 )
( 10.109 , 0.00031 )
( 10.186 , 0.00013 )
( 10.262 , 0.00018 )
( 10.338 , 8e-05 )
( 10.415 , 5e-05 )
( 10.491 , 0.00016 )
( 10.567 , 5e-05 )
( 10.644 , 0.0001 )
( 10.72 , 8e-05 )
( 10.796 , 0.0001 )
( 10.873 , 3e-05 )
( 10.949 , 3e-05 )
( 11.025 , 0.0 )
( 11.102 , 3e-05 )
( 11.178 , 5e-05 )
( 11.254 , 3e-05 )
( 11.33 , 3e-05 )
( 11.407 , 3e-05 )
( 11.483 , 3e-05 )
( 11.559 , 3e-05 )
( 11.636 , 5e-05 )
( 11.712 , 5e-05 )
( 11.788 , 5e-05 )
( 11.865 , 3e-05 )
( 11.941 , 0.0001 )};
        \end{axis}
    \end{tikzpicture}\begin{tikzpicture}
      \begin{axis}[
	  height=8em,
 	  width=21em,
 	  area style,
      tick label style={font=\tiny},
 	  ytick={0,0.5},
	  xmin=0,
	  xmax=10,
	  ymin=0,
	  ymax=0.5]
	  \addplot[ ybar interval,
	  fill,
                color=black!70
                ] coordinates {( 0.05 , 0.284 )
( 0.15 , 0.234 )
( 0.25 , 0.228 )
( 0.35 , 0.221 )
( 0.45 , 0.192 )
( 0.55 , 0.183 )
( 0.65 , 0.182 )
( 0.75 , 0.175 )
( 0.85 , 0.158 )
( 0.95 , 0.161 )
( 1.05 , 0.141 )
( 1.15 , 0.136 )
( 1.25 , 0.133 )
( 1.35 , 0.121 )
( 1.45 , 0.116 )
( 1.55 , 0.118 )
( 1.65 , 0.103 )
( 1.75 , 0.101 )
( 1.85 , 0.098 )
( 1.95 , 0.095 )
( 2.05 , 0.342 )
( 2.15 , 0.335 )
( 2.25 , 0.331 )
( 2.35 , 0.329 )
( 2.45 , 0.317 )
( 2.55 , 0.33 )
( 2.65 , 0.31 )
( 2.75 , 0.313 )
( 2.85 , 0.319 )
( 2.95 , 0.301 )
( 3.05 , 0.301 )
( 3.15 , 0.3 )
( 3.25 , 0.302 )
( 3.35 , 0.293 )
( 3.45 , 0.289 )
( 3.55 , 0.294 )
( 3.65 , 0.293 )
( 3.75 , 0.29 )
( 3.85 , 0.292 )
( 3.95 , 0.271 )
( 4.05 , 0.035 )
( 4.15 , 0.033 )
( 4.25 , 0.033 )
( 4.35 , 0.031 )
( 4.45 , 0.027 )
( 4.55 , 0.025 )
( 4.65 , 0.025 )
( 4.75 , 0.021 )
( 4.85 , 0.024 )
( 4.95 , 0.023 )
( 5.05 , 0.022 )
( 5.15 , 0.018 )
( 5.25 , 0.015 )
( 5.35 , 0.019 )
( 5.45 , 0.015 )
( 5.55 , 0.013 )
( 5.65 , 0.018 )
( 5.75 , 0.017 )
( 5.85 , 0.012 )
( 5.95 , 0.012 )
( 6.05 , 0.015 )
( 6.15 , 0.012 )
( 6.25 , 0.012 )
( 6.35 , 0.013 )
( 6.45 , 0.009 )
( 6.55 , 0.01 )
( 6.65 , 0.009 )
( 6.75 , 0.009 )
( 6.85 , 0.008 )
( 6.95 , 0.01 )
( 7.05 , 0.008 )
( 7.15 , 0.008 )
( 7.25 , 0.007 )
( 7.35 , 0.006 )
( 7.45 , 0.006 )
( 7.55 , 0.006 )
( 7.65 , 0.005 )
( 7.75 , 0.005 )
( 7.85 , 0.005 )
( 7.95 , 0.005 )
( 8.05 , 0.006 )
( 8.15 , 0.004 )
( 8.25 , 0.006 )
( 8.35 , 0.005 )
( 8.45 , 0.005 )
( 8.55 , 0.003 )
( 8.65 , 0.003 )
( 8.75 , 0.004 )
( 8.85 , 0.003 )
( 8.95 , 0.004 )
( 9.05 , 0.001 )
( 9.15 , 0.003 )
( 9.25 , 0.001 )
( 9.35 , 0.002 )
( 9.45 , 0.004 )
( 9.55 , 0.003 )
( 9.65 , 0.002 )
( 9.75 , 0.002 )
( 9.85 , 0.004 )
( 9.95 , 0.002 )};
        \end{axis}
    \end{tikzpicture}\caption{\footnotesize Illustration of Example \ref{ex_dw} (left) and Example \ref{ex_ns} (right). The target densities are displayed in the top images while the bottom images show the respective histograms with a sample size of \(N=50000\) each.}\label{fig1}
\end{figure}
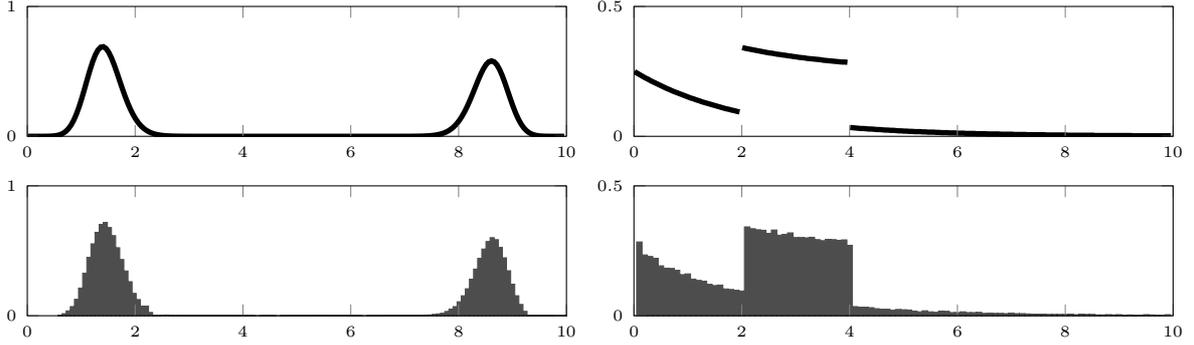

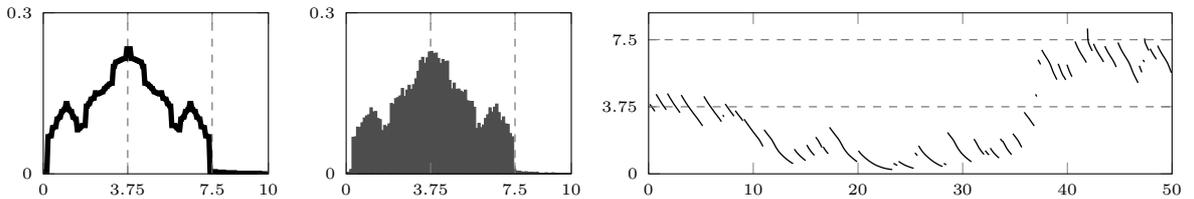
\begin{figure}
\begin{tikzpicture}
\begin{axis}[
	  height=9em,
 	  width=11em,
 	  area style,
      tick label style={font=\tiny},
 	  ytick={0,0.3,1},
 	  xtick={0,3.75,7.5,10},
	  xmin=0,
	  xmax=10,
	  ymin=0,
	  ymax=0.3]
	  \addplot[line width=0.5, color=black!50, dashed] coordinates{(7.5,0)(7.5,1)};
	  \addplot[line width=0.5, color=black!50, dashed] coordinates{(3.75,0)(3.75,1)};
		\addplot[line width=2, color=black] coordinates{( 0.01 , 0.004 )
( 0.06 , 0.004 )
( 0.11 , 0.004 )
( 0.16 , 0.004 )
( 0.21 , 0.069 )
( 0.26 , 0.069 )
( 0.31 , 0.071 )
( 0.36 , 0.071 )
( 0.41 , 0.086 )
( 0.46 , 0.086 )
( 0.51 , 0.088 )
( 0.56 , 0.088 )
( 0.61 , 0.102 )
( 0.66 , 0.102 )
( 0.71 , 0.106 )
( 0.76 , 0.106 )
( 0.81 , 0.116 )
( 0.86 , 0.116 )
( 0.91 , 0.12 )
( 0.96 , 0.12 )
( 1.01 , 0.131 )
( 1.06 , 0.131 )
( 1.11 , 0.12 )
( 1.16 , 0.12 )
( 1.21 , 0.116 )
( 1.26 , 0.116 )
( 1.31 , 0.106 )
( 1.36 , 0.106 )
( 1.41 , 0.102 )
( 1.46 , 0.102 )
( 1.51 , 0.082 )
( 1.56 , 0.082 )
( 1.61 , 0.086 )
( 1.66 , 0.086 )
( 1.71 , 0.09 )
( 1.76 , 0.09 )
( 1.81 , 0.09 )
( 1.86 , 0.09 )
( 1.91 , 0.126 )
( 1.96 , 0.126 )
( 2.01 , 0.131 )
( 2.06 , 0.131 )
( 2.11 , 0.139 )
( 2.16 , 0.139 )
( 2.21 , 0.141 )
( 2.26 , 0.141 )
( 2.31 , 0.151 )
( 2.36 , 0.151 )
( 2.41 , 0.149 )
( 2.46 , 0.149 )
( 2.51 , 0.151 )
( 2.56 , 0.151 )
( 2.61 , 0.153 )
( 2.66 , 0.153 )
( 2.71 , 0.159 )
( 2.76 , 0.159 )
( 2.81 , 0.163 )
( 2.86 , 0.163 )
( 2.91 , 0.167 )
( 2.96 , 0.167 )
( 3.01 , 0.169 )
( 3.06 , 0.169 )
( 3.11 , 0.2 )
( 3.16 , 0.2 )
( 3.21 , 0.208 )
( 3.26 , 0.208 )
( 3.31 , 0.21 )
( 3.36 , 0.21 )
( 3.41 , 0.212 )
( 3.46 , 0.212 )
( 3.51 , 0.214 )
( 3.56 , 0.214 )
( 3.61 , 0.215 )
( 3.66 , 0.215 )
( 3.71 , 0.233 )
( 3.76 , 0.233 )
( 3.81 , 0.233 )
( 3.86 , 0.233 )
( 3.91 , 0.215 )
( 3.96 , 0.215 )
( 4.01 , 0.214 )
( 4.06 , 0.214 )
( 4.11 , 0.212 )
( 4.16 , 0.212 )
( 4.21 , 0.21 )
( 4.26 , 0.21 )
( 4.31 , 0.208 )
( 4.36 , 0.208 )
( 4.41 , 0.2 )
( 4.46 , 0.2 )
( 4.51 , 0.169 )
( 4.56 , 0.169 )
( 4.61 , 0.167 )
( 4.66 , 0.167 )
( 4.71 , 0.163 )
( 4.76 , 0.163 )
( 4.81 , 0.159 )
( 4.86 , 0.159 )
( 4.91 , 0.153 )
( 4.96 , 0.153 )
( 5.01 , 0.151 )
( 5.06 , 0.151 )
( 5.11 , 0.149 )
( 5.16 , 0.149 )
( 5.21 , 0.151 )
( 5.26 , 0.151 )
( 5.31 , 0.141 )
( 5.36 , 0.141 )
( 5.41 , 0.139 )
( 5.46 , 0.139 )
( 5.51 , 0.131 )
( 5.56 , 0.131 )
( 5.61 , 0.126 )
( 5.66 , 0.126 )
( 5.71 , 0.09 )
( 5.76 , 0.09 )
( 5.81 , 0.09 )
( 5.86 , 0.09 )
( 5.91 , 0.086 )
( 5.96 , 0.086 )
( 6.01 , 0.082 )
( 6.06 , 0.082 )
( 6.11 , 0.102 )
( 6.16 , 0.102 )
( 6.21 , 0.106 )
( 6.26 , 0.106 )
( 6.31 , 0.116 )
( 6.36 , 0.116 )
( 6.41 , 0.12 )
( 6.46 , 0.12 )
( 6.51 , 0.131 )
( 6.56 , 0.131 )
( 6.61 , 0.12 )
( 6.66 , 0.12 )
( 6.71 , 0.116 )
( 6.76 , 0.116 )
( 6.81 , 0.106 )
( 6.86 , 0.106 )
( 6.91 , 0.102 )
( 6.96 , 0.102 )
( 7.01 , 0.088 )
( 7.06 , 0.088 )
( 7.11 , 0.086 )
( 7.16 , 0.086 )
( 7.21 , 0.071 )
( 7.26 , 0.071 )
( 7.31 , 0.069 )
( 7.36 , 0.069 )
( 7.41 , 0.004 )
( 7.46 , 0.004 )
( 7.51 , 0.004 )
( 7.56 , 0.004 )
( 7.61 , 0.004 )
( 7.66 , 0.004 )
( 7.71 , 0.004 )
( 7.76 , 0.003 )
( 7.81 , 0.003 )
( 7.86 , 0.003 )
( 7.91 , 0.003 )
( 7.96 , 0.003 )
( 8.01 , 0.003 )
( 8.06 , 0.003 )
( 8.11 , 0.002 )
( 8.16 , 0.002 )
( 8.21 , 0.002 )
( 8.26 , 0.002 )
( 8.31 , 0.002 )
( 8.36 , 0.002 )
( 8.41 , 0.002 )
( 8.46 , 0.002 )
( 8.51 , 0.002 )
( 8.56 , 0.002 )
( 8.61 , 0.001 )
( 8.66 , 0.001 )
( 8.71 , 0.001 )
( 8.76 , 0.001 )
( 8.81 , 0.001 )
( 8.86 , 0.001 )
( 8.91 , 0.001 )
( 8.96 , 0.001 )
( 9.01 , 0.001 )
( 9.06 , 0.001 )
( 9.11 , 0.001 )
( 9.16 , 0.001 )
( 9.21 , 0.001 )
( 9.26 , 0.001 )
( 9.31 , 0.001 )
( 9.36 , 0.001 )
( 9.41 , 0.001 )
( 9.46 , 0.001 )
( 9.51 , 0.001 )
( 9.56 , 0.001 )
( 9.61 , 0.001 )
( 9.66 , 0.001 )
( 9.71 , 0.0 )
( 9.76 , 0.0 )
( 9.81 , 0.0 )
( 9.86 , 0.0 )
( 9.91 , 0.0 )
( 9.96 , 0.0 )};
        \end{axis}
        \end{tikzpicture}
\begin{tikzpicture}
      \begin{axis}[
	  height=9em,
 	  width=11em,
 	  area style,
      tick label style={font=\tiny},
 	  ytick={0,0.3},
 	  xtick={0,3.75,7.5,10},
	  xmin=0,
	  xmax=10,
	  ymin=0,
	  ymax=0.3]
	  \addplot[line width=0.5, color=black!50, dashed] coordinates{(7.5,0)(7.5,1)};
	  \addplot[line width=0.5, color=black!50, dashed] coordinates{(3.75,0)(3.75,1)};
	  \addplot[ ybar interval,
	  fill,
                color=black!70
                ] coordinates {( 0.06 , 0.003 )
( 0.16 , 0.008 )
( 0.26 , 0.068 )
( 0.36 , 0.069 )
( 0.46 , 0.087 )
( 0.56 , 0.086 )
( 0.66 , 0.095 )
( 0.76 , 0.104 )
( 0.86 , 0.108 )
( 0.96 , 0.11 )
( 1.06 , 0.123 )
( 1.16 , 0.115 )
( 1.26 , 0.116 )
( 1.36 , 0.103 )
( 1.46 , 0.101 )
( 1.56 , 0.08 )
( 1.66 , 0.09 )
( 1.76 , 0.088 )
( 1.86 , 0.093 )
( 1.96 , 0.131 )
( 2.06 , 0.129 )
( 2.15 , 0.14 )
( 2.25 , 0.142 )
( 2.35 , 0.145 )
( 2.45 , 0.159 )
( 2.55 , 0.144 )
( 2.65 , 0.156 )
( 2.75 , 0.165 )
( 2.85 , 0.157 )
( 2.95 , 0.166 )
( 3.05 , 0.177 )
( 3.15 , 0.191 )
( 3.25 , 0.211 )
( 3.35 , 0.217 )
( 3.45 , 0.206 )
( 3.55 , 0.216 )
( 3.65 , 0.227 )
( 3.75 , 0.228 )
( 3.85 , 0.224 )
( 3.95 , 0.225 )
( 4.05 , 0.215 )
( 4.15 , 0.205 )
( 4.25 , 0.213 )
( 4.35 , 0.209 )
( 4.45 , 0.21 )
( 4.55 , 0.174 )
( 4.65 , 0.158 )
( 4.75 , 0.169 )
( 4.85 , 0.157 )
( 4.95 , 0.158 )
( 5.05 , 0.154 )
( 5.15 , 0.152 )
( 5.25 , 0.156 )
( 5.35 , 0.136 )
( 5.45 , 0.135 )
( 5.55 , 0.135 )
( 5.65 , 0.124 )
( 5.75 , 0.084 )
( 5.85 , 0.094 )
( 5.95 , 0.086 )
( 6.05 , 0.091 )
( 6.14 , 0.103 )
( 6.24 , 0.11 )
( 6.34 , 0.123 )
( 6.44 , 0.115 )
( 6.54 , 0.124 )
( 6.64 , 0.13 )
( 6.74 , 0.114 )
( 6.84 , 0.099 )
( 6.94 , 0.106 )
( 7.04 , 0.093 )
( 7.14 , 0.092 )
( 7.24 , 0.075 )
( 7.34 , 0.069 )
( 7.44 , 0.008 )
( 7.54 , 0.005 )
( 7.64 , 0.005 )
( 7.74 , 0.004 )
( 7.84 , 0.003 )
( 7.94 , 0.004 )
( 8.04 , 0.003 )
( 8.14 , 0.003 )
( 8.24 , 0.003 )
( 8.34 , 0.002 )
( 8.44 , 0.003 )
( 8.54 , 0.001 )
( 8.64 , 0.001 )
( 8.74 , 0.001 )
( 8.84 , 0.002 )
( 8.94 , 0.001 )
( 9.04 , 0.001 )
( 9.14 , 0.002 )
( 9.24 , 0.001 )
( 9.34 , 0.001 )
( 9.44 , 0.001 )
( 9.54 , 0.001 )
( 9.64 , 0.001 )
( 9.74 , 0.001 )
( 9.84 , 0.001 )
( 9.94 , 0.001 )};
        \end{axis}
    \end{tikzpicture}\begin{tikzpicture}
\begin{axis}[
	  height=9em,
 	  width=20.5em,
 	  area style,
      tick label style={font=\tiny},
 	  ytick={0,3.75,7.5},
	  xmin=0,
	  xmax=50,
	  ymin=0,
	  ymax=9]\addplot[line width=0.5, dashed, color=black!50] coordinates{(0,7.5)(50,7.5)};\addplot[line width=0.5, dashed, color=black!50] coordinates{(0,3.75)(50,3.75)};
		\addplot[line width=0.5, color=black] coordinates{
( 0.12000000000000009 , 3.8905677202104254 )
( 0.24000000000000019 , 3.7911923416675326 )
( 0.36000000000000026 , 3.6946234961629436 )
( 0.48000000000000037 , 3.5938000570420523 )
( 0.6000000000000004 , 3.495662571601529 )
};\addplot[line width=0.5, color=black] coordinates{
( 0.7200000000000005 , 4.451525670513662 )
( 0.8400000000000006 , 4.331100178475648 )
( 0.9600000000000007 , 4.214084700399095 )
( 1.0799999999999919 , 4.099312921899824 )
( 1.1999999999999786 , 3.9871961678448526 )
( 1.3199999999999654 , 3.878969323070998 )
( 1.4399999999999522 , 3.7799497709509606 )
( 1.559999999999939 , 3.6828546560300373 )
( 1.6799999999999258 , 3.5823306311861183 )
};\addplot[line width=0.5, color=black] coordinates{
( 1.7999999999999126 , 4.490578891296439 )
( 1.9199999999998993 , 4.368619764785199 )
( 2.0399999999998863 , 4.251021286820655 )
( 2.159999999999873 , 4.135547308619692 )
( 2.27999999999986 , 4.022557120137502 )
( 2.3999999999998467 , 3.9119019674890256 )
( 2.5199999999998335 , 3.810963685304802 )
( 2.6399999999998203 , 3.714114994016858 )
( 2.759999999999807 , 3.613876670455559 )
( 2.879999999999794 , 3.5151914125592367 )
( 2.9999999999997806 , 3.4189341859280513 )
};\addplot[line width=0.5, color=black] coordinates{
( 3.1199999999997674 , 4.402102694276655 )
( 3.239999999999754 , 4.283894219468085 )
( 3.359999999999741 , 4.167827614787972 )
( 3.4799999999997278 , 4.054133815055995 )
( 3.5999999999997145 , 3.942855271653416 )
( 3.7199999999997013 , 3.8388710909916783 )
( 3.839999999999688 , 3.7411088751112835 )
( 3.959999999999675 , 3.642198796650104 )
( 4.079999999999697 , 3.5427400671878466 )
( 4.199999999999737 , 3.445805028943691 )
( 4.319999999999777 , 3.3513988303583004 )
( 4.439999999999817 , 3.2597724555577323 )
( 4.5599999999998575 , 3.1702459971842885 )
( 4.679999999999898 , 3.078938352131954 )
( 4.799999999999938 , 2.9779224201424386 )
( 4.919999999999978 , 2.8783741551973097 )
( 5.040000000000018 , 2.7794075306352584 )
( 5.160000000000058 , 2.680815684324024 )
};\addplot[line width=0.5, color=black] coordinates{
( 5.280000000000098 , 4.334537513243986 )
( 5.400000000000138 , 4.217461735805532 )
( 5.520000000000178 , 4.102619191588234 )
( 5.640000000000218 , 3.9904175195852725 )
( 5.760000000000258 , 3.8818508170369324 )
( 5.880000000000298 , 3.782735464046614 )
( 6.000000000000338 , 3.6858044260128584 )
( 6.1200000000003785 , 3.5852003151862046 )
( 6.2400000000004185 , 3.487260192773002 )
( 6.360000000000459 , 3.3917602091959456 )
( 6.480000000000499 , 3.2989381922794365 )
( 6.600000000000539 , 3.2089400454495474 )
( 6.720000000000579 , 3.1195522677183285 )
( 6.840000000000619 , 3.0210978796263595 )
( 6.960000000000659 , 2.9210413598221976 )
};\addplot[line width=0.5, color=black] coordinates{
( 7.080000000000699 , 3.2841833977953425 )
( 7.200000000000739 , 3.1945858611465714 )
};\addplot[line width=0.5, color=black] coordinates{
( 7.320000000000779 , 3.903575499864665 )
( 7.440000000000819 , 3.8035461604067557 )
( 7.560000000000859 , 3.7069450626283706 )
( 7.680000000000899 , 3.606354016133165 )
( 7.8000000000009395 , 3.5078785276828612 )
( 7.92000000000098 , 3.4118075652762223 )
( 8.040000000000983 , 3.318350561379265 )
( 8.160000000000917 , 3.2277346242436864 )
};\addplot[line width=0.5, color=black] coordinates{
( 8.28000000000085 , 3.5306971632278765 )
( 8.400000000000784 , 3.4340553913038585 )
( 8.520000000000717 , 3.339980372263365 )
( 8.64000000000065 , 3.2486983401922362 )
( 8.760000000000584 , 3.159193645569718 )
( 8.880000000000518 , 3.066357355227214 )
( 9.000000000000451 , 2.965543928841139 )
};\addplot[line width=0.5, color=black] coordinates{
( 9.120000000000385 , 3.0558977522080433 )
( 9.240000000000318 , 2.9552569976713796 )
( 9.360000000000252 , 2.8558219290058044 )
( 9.480000000000185 , 2.7569843536259375 )
( 9.600000000000119 , 2.6581911490843964 )
( 9.720000000000052 , 2.5596603187850384 )
( 9.839999999999986 , 2.4630315566021026 )
( 9.95999999999992 , 2.3693694302361465 )
( 10.079999999999853 , 2.2787369124157197 )
( 10.199999999999786 , 2.1865242763986634 )
( 10.31999999999972 , 2.0965674196171786 )
( 10.439999999999653 , 2.004845107151693 )
( 10.559999999999587 , 1.9138978039016064 )
( 10.67999999999952 , 1.7943963206776374 )
( 10.799999999999454 , 1.6675546415695306 )
( 10.919999999999387 , 1.5324268645595702 )
};\addplot[line width=0.5, color=black] coordinates{
( 11.03999999999932 , 2.4590601849523477 )
( 11.159999999999254 , 2.3655952056055556 )
( 11.279999999999188 , 2.2748478078848695 )
( 11.399999999999121 , 2.18272464423085 )
( 11.519999999999055 , 2.092690170795603 )
( 11.639999999998988 , 2.0011120744244817 )
( 11.759999999998922 , 1.9101904232848281 )
( 11.879999999998855 , 1.7891854825282159 )
( 11.999999999998789 , 1.6620854524195194 )
( 12.119999999998722 , 1.5266409370045548 )
( 12.239999999998656 , 1.4106762672715534 )
( 12.359999999998589 , 1.3069812329444093 )
( 12.479999999998522 , 1.2161244660415567 )
( 12.599999999998456 , 1.1332616043434824 )
( 12.71999999999839 , 1.0597109991092912 )
( 12.839999999998323 , 0.9936335513377758 )
( 12.959999999998256 , 0.9281141306669268 )
( 13.07999999999819 , 0.8668754741755069 )
( 13.199999999998123 , 0.8100434135551139 )
( 13.319999999998057 , 0.7543607978762169 )
( 13.43999999999799 , 0.7029442034448763 )
( 13.559999999997924 , 0.6547910185652036 )
( 13.679999999997857 , 0.6113668013963884 )
( 13.799999999997791 , 0.5679735492593327 )
};\addplot[line width=0.5, color=black] coordinates{
( 13.919999999997724 , 1.3805639149831912 )
( 14.039999999997658 , 1.2802257807240058 )
( 14.159999999997591 , 1.191664694047111 )
( 14.279999999997525 , 1.1108365464551544 )
( 14.399999999997458 , 1.0405293254945704 )
( 14.519999999997392 , 0.9744695671356457 )
( 14.639999999997325 , 0.9104884647259018 )
( 14.759999999997259 , 0.8501917755949121 )
( 14.879999999997192 , 0.7943898478233391 )
( 14.999999999997126 , 0.7392378254491071 )
};\addplot[line width=0.5, color=black] coordinates{
( 15.11999999999706 , 1.6140266893901871 )
( 15.239999999996993 , 1.4804546743148264 )
( 15.359999999996926 , 1.3719525880907053 )
( 15.47999999999686 , 1.2726906744212765 )
( 15.599999999996793 , 1.1847824635794488 )
( 15.719999999996727 , 1.1043874808644485 )
( 15.83999999999666 , 1.035031544598114 )
};\addplot[line width=0.5, color=black] coordinates{
( 15.959999999996594 , 2.0530288846817686 )
( 16.07999999999667 , 1.9617689087934187 )
( 16.199999999996816 , 1.8609291162722992 )
( 16.319999999996963 , 1.7356082202742804 )
( 16.43999999999711 , 1.6058184969484073 )
( 16.559999999997256 , 1.4736912875551405 )
};\addplot[line width=0.5, color=black] coordinates{
( 16.679999999997403 , 1.5045521352089952 )
( 16.79999999999755 , 1.3936301974652292 )
( 16.919999999997696 , 1.2916911672972293 )
( 17.039999999997843 , 1.2022543477354057 )
( 17.15999999999799 , 1.1206750013525655 )
};\addplot[line width=0.5, color=black] coordinates{
( 17.279999999998136 , 2.6025531874786716 )
( 17.399999999998283 , 2.5050616490589572 )
( 17.51999999999843 , 2.409564588719415 )
( 17.639999999998576 , 2.318618940407238 )
( 17.759999999998723 , 2.226446153511319 )
( 17.87999999999887 , 2.1354810793499572 )
( 17.999999999999016 , 2.044434384262881 )
( 18.119999999999163 , 1.9532110890058048 )
( 18.23999999999931 , 1.8492468461931402 )
( 18.359999999999456 , 1.7239099030904945 )
( 18.479999999999603 , 1.5932020046701751 )
( 18.59999999999975 , 1.4632793248125067 )
( 18.719999999999896 , 1.3559001432545763 )
( 18.840000000000042 , 1.2586907451489755 )
( 18.96000000000019 , 1.1720056819122198 )
( 19.080000000000336 , 1.09298071621661 )
( 19.200000000000482 , 1.0248166476845002 )
( 19.32000000000063 , 0.9587806063478725 )
( 19.440000000000776 , 0.895965540483467 )
( 19.560000000000922 , 0.8365617572871958 )
( 19.68000000000107 , 0.7807824919238878 )
( 19.800000000001216 , 0.726882580552975 )
( 19.920000000001362 , 0.6772379306133126 )
};\addplot[line width=0.5, color=black] coordinates{
( 20.04000000000151 , 1.290009102609531 )
( 20.160000000001656 , 1.2006620348877952 )
( 20.280000000001802 , 1.1192522465790138 )
( 20.40000000000195 , 1.0477104289463817 )
( 20.520000000002096 , 0.9816516877382883 )
( 20.640000000002242 , 0.9170920277687374 )
( 20.76000000000239 , 0.8564441208663837 )
( 20.880000000002536 , 0.800543583048844 )
( 21.000000000002682 , 0.7449110747452609 )
( 21.12000000000283 , 0.6941922935274004 )
( 21.240000000002976 , 0.6467810293138447 )
( 21.360000000003122 , 0.6041881720293913 )
( 21.48000000000327 , 0.5604966290514097 )
( 21.600000000003416 , 0.5207604022293946 )
( 21.720000000003562 , 0.48500453640496294 )
( 21.84000000000371 , 0.45280261394584803 )
( 21.960000000003856 , 0.42432448223615943 )
( 22.080000000004002 , 0.39903777912248456 )
( 22.20000000000415 , 0.37245616089286915 )
( 22.320000000004296 , 0.3490896551889729 )
( 22.440000000004442 , 0.3285685319036203 )
( 22.56000000000459 , 0.3106085064808148 )
( 22.680000000004735 , 0.29476292936694787 )
( 22.800000000004882 , 0.2806640099273722 )
( 22.92000000000503 , 0.26837161274552196 )
( 23.040000000005175 , 0.25767813025087216 )
( 23.160000000005322 , 0.2484224733128777 )
( 23.28000000000547 , 0.2403974848523949 )
};\addplot[line width=0.5, color=black] coordinates{
( 23.400000000005615 , 0.5848450666021986 )
( 23.520000000005762 , 0.5425256132885921 )
( 23.64000000000591 , 0.5047317640348142 )
( 23.760000000006055 , 0.4704141961404484 )
};\addplot[line width=0.5, color=black] coordinates{
( 23.880000000006202 , 0.6969181859556188 )
( 24.00000000000635 , 0.6492458617341099 )
( 24.120000000006495 , 0.6063888395105215 )
( 24.240000000006642 , 0.5628211262446159 )
( 24.36000000000679 , 0.5228353858791858 )
( 24.480000000006935 , 0.48689434296532536 )
( 24.600000000007082 , 0.45448681015399406 )
( 24.72000000000723 , 0.4258021420310246 )
( 24.840000000007375 , 0.40050361634024306 )
( 24.960000000007522 , 0.37387241195294774 )
( 25.08000000000767 , 0.35032875256190993 )
( 25.200000000007815 , 0.32965423700682966 )
( 25.320000000007962 , 0.3115380534138804 )
};\addplot[line width=0.5, color=black] coordinates{
( 25.44000000000811 , 1.1678605984218662 )
( 25.560000000008255 , 1.0894069339602956 )
( 25.680000000008402 , 1.0215074674535511 )
};\addplot[line width=0.5, color=black] coordinates{
( 25.80000000000855 , 1.570737799052634 )
( 25.920000000008695 , 1.4452840649576413 )
( 26.040000000008842 , 1.3391422691279307 )
( 26.16000000000899 , 1.2440401628347186 )
( 26.280000000009135 , 1.1586630511971858 )
( 26.400000000009282 , 1.0815112277701846 )
( 26.52000000000943 , 1.014227667172681 )
( 26.640000000009575 , 0.9481919358462209 )
( 26.760000000009722 , 0.8859131045557939 )
( 26.88000000000987 , 0.8273931063086333 )
( 27.000000000010015 , 0.7716659707373562 )
( 27.12000000001016 , 0.7186176830775877 )
( 27.24000000001031 , 0.6694930287397421 )
( 27.360000000010455 , 0.624567809107792 )
( 27.4800000000106 , 0.5817377100295822 )
( 27.60000000001075 , 0.5397547120836548 )
( 27.720000000010895 , 0.5022484517067883 )
( 27.84000000001104 , 0.4681595951434056 )
( 27.96000000001119 , 0.4378849944109778 )
( 28.080000000011335 , 0.4111545913723171 )
};\addplot[line width=0.5, color=black] coordinates{
( 28.20000000001148 , 0.5858193471280438 )
( 28.32000000001163 , 0.5434034418718455 )
( 28.440000000011775 , 0.5055138245230337 )
};\addplot[line width=0.5, color=black] coordinates{
( 28.56000000001192 , 2.3094422221132196 )
( 28.68000000001207 , 2.216941551757327 )
( 28.800000000012215 , 2.1262176396892616 )
( 28.92000000001236 , 2.034982426264234 )
( 29.04000000001251 , 1.9438327566636913 )
( 29.160000000012655 , 1.8361167748084188 )
( 29.2800000000128 , 1.710765310442091 )
( 29.400000000012948 , 1.5786795656807802 )
( 29.520000000013095 , 1.451798177779374 )
( 29.64000000001324 , 1.3452139267656216 )
( 29.760000000013388 , 1.2493328712878213 )
( 29.880000000013535 , 1.1634618748792185 )
( 30.00000000001368 , 1.085643736984888 )
( 30.120000000013828 , 1.0180365328000864 )
( 30.240000000013975 , 0.9519947470432618 )
( 30.36000000001412 , 0.8895204696847431 )
( 30.480000000014268 , 0.8306883714053603 )
( 30.600000000014415 , 0.7749510908642383 )
( 30.72000000001456 , 0.7215922916271076 )
( 30.840000000014708 , 0.672277424807282 )
};\addplot[line width=0.5, color=black] coordinates{
( 30.960000000014855 , 0.8232053659878968 )
};\addplot[line width=0.5, color=black] coordinates{
( 31.080000000015 , 1.9455875466248755 )
( 31.200000000015148 , 1.8384451435953284 )
( 31.320000000015295 , 1.71309598404368 )
( 31.44000000001544 , 1.5812320686350465 )
( 31.560000000015588 , 1.453649495495887 )
( 31.680000000015735 , 1.34693811153216 )
( 31.80000000001588 , 1.2508374097943027 )
( 31.920000000016028 , 1.1648349507563835 )
( 32.04000000001603 , 1.0868251300638199 )
};\addplot[line width=0.5, color=black] coordinates{
( 32.16000000001575 , 1.2982115987834835 )
( 32.28000000001547 , 1.2084406296706778 )
( 32.40000000001519 , 1.126293338656513 )
( 32.52000000001491 , 1.0537183874303415 )
};\addplot[line width=0.5, color=black] coordinates{
( 32.64000000001463 , 1.4373116553577083 )
( 32.76000000001435 , 1.3317327455893035 )
( 32.88000000001407 , 1.237615399926571 )
( 33.00000000001379 , 1.1527969293404128 )
( 33.12000000001351 , 1.0764796729191992 )
( 33.24000000001323 , 1.0095851597483776 )
( 33.36000000001295 , 0.9435830293328834 )
( 33.48000000001267 , 0.8815444916424876 )
};\addplot[line width=0.5, color=black] coordinates{
( 33.60000000001239 , 1.9587354034881395 )
( 33.720000000012114 , 1.8567603962420562 )
( 33.840000000011834 , 1.8175350700886317 )
( 33.960000000011554 , 1.6918198485139035 )
( 34.080000000011275 , 1.5580750945806199 )
( 34.200000000010995 , 1.435257128958793 )
( 34.320000000010715 , 1.3298155654201071 )
( 34.440000000010436 , 1.2359306069294036 )
( 34.560000000010156 , 1.151258457614498 )
( 34.680000000009876 , 1.0751476781459453 )
};\addplot[line width=0.5, color=black] coordinates{
( 34.8000000000096 , 2.241627577520432 )
( 34.92000000000932 , 2.150295263530376 )
( 35.04000000000904 , 2.059576704434747 )
( 35.16000000000876 , 1.968262150914804 )
( 35.28000000000848 , 1.869667899809716 )
( 35.4000000000082 , 1.7443608068892416 )
( 35.52000000000792 , 1.6150002285825118 )
( 35.64000000000764 , 1.4812560846802678 )
};\addplot[line width=0.5, color=black] coordinates{
( 35.76000000000736 , 2.6350933414818396 )
};\addplot[line width=0.5, color=black] coordinates{
( 35.88000000000708 , 3.4405559240109853 )
( 36.0000000000068 , 3.3462997733099846 )
( 36.12000000000652 , 3.254826741142593 )
( 36.24000000000624 , 3.165330318375006 )
( 36.36000000000596 , 3.0733011931884042 )
( 36.48000000000568 , 2.9723786803513255 )
( 36.6000000000054 , 2.8728553452084475 )
( 36.72000000000512 , 2.7739165048004013 )
( 36.84000000000484 , 2.6752708052409084 )
};\addplot[line width=0.5, color=black] coordinates{
( 36.96000000000456 , 4.445606243299821 )
( 37.08000000000428 , 4.325430714396322 )
};\addplot[line width=0.5, color=black] coordinates{
( 37.200000000004 , 6.375768130794171 )
( 37.32000000000372 , 6.252817369370817 )
( 37.440000000003444 , 6.115535382390964 )
};\addplot[line width=0.5, color=black] coordinates{
( 37.560000000003164 , 6.929102843670829 )
( 37.680000000002885 , 6.799236857650795 )
( 37.800000000002605 , 6.68125163969978 )
( 37.920000000002325 , 6.569164276770088 )
( 38.040000000002046 , 6.461159906430887 )
( 38.160000000001766 , 6.345420046736942 )
( 38.280000000001486 , 6.218858723750407 )
( 38.40000000000121 , 6.073620421632551 )
( 38.52000000000093 , 5.888736103458611 )
( 38.64000000000065 , 5.6972064984547055 )
( 38.76000000000037 , 5.55130809253038 )
( 38.88000000000009 , 5.408054172035234 )
( 38.99999999999981 , 5.266824054092172 )
};\addplot[line width=0.5, color=black] coordinates{
( 39.11999999999953 , 6.151829994957961 )
( 39.23999999999925 , 5.98364114539602 )
( 39.35999999999897 , 5.797312813216049 )
( 39.47999999999869 , 5.625683713775754 )
( 39.59999999999841 , 5.480171875880044 )
( 39.71999999999813 , 5.389230637061151 )
( 39.83999999999785 , 5.24876445038383 )
};\addplot[line width=0.5, color=black] coordinates{
( 39.95999999999757 , 6.102588551232086 )
( 40.07999999999729 , 5.919724436078938 )
( 40.19999999999701 , 5.729653137317948 )
( 40.31999999999673 , 5.575395145726769 )
( 40.43999999999645 , 5.431634774316997 )
};\addplot[line width=0.5, color=black] coordinates{
( 40.55999999999617 , 5.870398948885487 )
};\addplot[line width=0.5, color=black] coordinates{
( 40.67999999999589 , 6.9592556389104026 )
};\addplot[line width=0.5, color=black] coordinates{
( 40.73519999999602 , 7.395738787830659 )
( 40.85519999999574 , 7.224661614705234 )
( 40.97519999999546 , 7.071620462531939 )
( 41.09519999999518 , 6.930676092454867 )
( 41.2151999999949 , 6.8007957056245365 )
( 41.33519999999462 , 6.682686817268846 )
( 41.45519999999434 , 6.570446411296561 )
( 41.57519999999406 , 6.462581445862925 )
( 41.69519999999378 , 6.346917686276397 )
( 41.8151999999935 , 6.220582831291941 )
};\addplot[line width=0.5, color=black] coordinates{
( 41.925299999993285 , 8.12451954330628 )
( 41.937299999993684 , 7.947880348224596 )
( 41.94929999999408 , 7.752497267394219 )
( 41.96129999999448 , 7.536749193538 )
( 42.02369999999456 , 7.319515079706657 )
( 42.14369999999428 , 7.154201124268122 )
( 42.263699999994 , 7.00388525532844 )
( 42.38369999999372 , 6.87180801068097 )
};\addplot[line width=0.5, color=black] coordinates{
( 42.503699999993444 , 7.266705965548134 )
( 42.623699999993164 , 7.107940140104378 )
( 42.743699999992884 , 6.962679237451094 )
( 42.863699999992605 , 6.832128608592028 )
( 42.983699999992325 , 6.710822278749295 )
( 43.103699999992045 , 6.596003840724408 )
( 43.223699999991766 , 6.490076715568122 )
( 43.343699999991486 , 6.375517555369564 )
( 43.463699999991206 , 6.25256070356818 )
};\addplot[line width=0.5, color=black] coordinates{
( 43.58369999999093 , 7.13458044073977 )
( 43.70369999999065 , 6.986000450032371 )
( 43.82369999999037 , 6.854986777367903 )
( 43.94369999999009 , 6.731776254424209 )
( 44.06369999998981 , 6.615935083953479 )
( 44.18369999998953 , 6.509294663042658 )
( 44.30369999998925 , 6.396309478603101 )
( 44.42369999998897 , 6.2756970160982934 )
( 44.54369999998869 , 6.140233405163378 )
( 44.66369999998841 , 5.968602187664094 )
};\addplot[line width=0.5, color=black] coordinates{
( 44.78369999998813 , 6.47910031139827 )
};\addplot[line width=0.5, color=black] coordinates{
( 44.83349999998829 , 7.344194670584592 )
( 44.95349999998801 , 7.175432178012719 )
( 45.07349999998773 , 7.025558099625302 )
( 45.19349999998745 , 6.890163585101335 )
( 45.31349999998717 , 6.7642999907354096 )
( 45.43349999998689 , 6.6474011682073755 )
( 45.55349999998661 , 6.538073315152533 )
( 45.67349999998633 , 6.4276166478432595 )
( 45.793499999986054 , 6.3104556940542125 )
( 45.913499999985774 , 6.178744666519565 )
( 46.033499999985494 , 6.019137633165334 )
( 46.153499999985215 , 5.833807134879324 )
( 46.273499999984935 , 5.653877031443621 )
( 46.393499999984655 , 5.507462809351536 )
( 46.513499999984376 , 5.365622228647399 )
( 46.633499999984096 , 5.2259891377077174 )
( 46.753499999983816 , 5.084984763358073 )
};\addplot[line width=0.5, color=black] coordinates{
( 46.87349999998354 , 6.043036497625936 )
( 46.99349999998326 , 5.8579328377648965 )
( 47.11349999998298 , 5.672949462064056 )
};\addplot[line width=0.5, color=black] coordinates{
( 47.2334999999827 , 6.374134133884101 )
( 47.35349999998242 , 6.251057260402117 )
};\addplot[line width=0.5, color=black] coordinates{
( 47.37629999998275 , 7.5721852141271375 )
( 47.42249999998293 , 7.346169758400921 )
( 47.54249999998265 , 7.177208498478677 )
( 47.66249999998237 , 7.027360208890025 )
( 47.78249999998209 , 6.891738522922698 )
};\addplot[line width=0.5, color=black] coordinates{
( 47.90249999998181 , 6.997504097576161 )
( 48.022499999981534 , 6.866240827490978 )
( 48.142499999981254 , 6.74219142728606 )
( 48.262499999980975 , 6.626026897437707 )
( 48.382499999980695 , 6.518484660155965 )
( 48.502499999980415 , 6.406421408233817 )
};\addplot[line width=0.5, color=black] coordinates{
( 48.622499999980135 , 7.170525703521194 )
( 48.742499999979856 , 7.020548364470858 )
( 48.862499999979576 , 6.885927217592494 )
( 48.9824999999793 , 6.760390036403388 )
( 49.10249999997902 , 6.643617756800794 )
( 49.22249999997874 , 6.53461136019164 )
( 49.34249999997846 , 6.423865729355782 )
( 49.46249999997818 , 6.306537775931335 )
( 49.5824999999779 , 6.174133189732823 )
( 49.70249999997762 , 6.012942422590834 )
( 49.82249999997734 , 5.827568493694143 )
( 49.94249999997706 , 5.649048255471964 )
};
        \end{axis}
        \end{tikzpicture}\caption{From left to right we see the target density function \(\pi\), a histogram of the sampled distribution with sample size \(N=50000\), and an exemplary sample path of \(\X\).}\label{fig2}
\end{figure}

\begin{example}[double-well]\label{ex_dw}
In \cite{FLI} is is pointed out that sampling from a target distribution \(\ppi(\diff x)=\pi(x)\diff x\) with two separated modes is challenging for classic \textsc{lmc}. The lower the values of \(\pi\) are between the modes the longer it takes on average for the continuous Langevin diffusion to move from one mode to the other.
This issue can be circumvented by allowing jumps.
Take
\begin{align*}
\pi(x)&=\exp\left\{\frac1{10}x(x-4)(x-6.02)(x-10)+0.5\right\}, \quad x>0,
\end{align*}
which is taken from \cite[Sec. 4]{FLI}, but shifted to the right such that both modes are contained in \((0,\infty)\). As driving noise we choose a Lévy process \(\L\) with characteristic triplet \((0,0,\mu)\) where
\begin{align*}
\mu(\diff x)&=\ee^{-x}\diff x + \delta_4 + 2\delta_8.
\end{align*}
Clearly, conditions \textbf{(b1)}, \textbf{(b2)}, and \textbf{(c2)} are fulfilled. Thus, \(\ppi\) is invariant for a solution \(\X\) of \eqref{eq_sde} with \(\phi\) as in \eqref{eq_driftcpn} by Theorem \ref{thm_cpn}. Further, since Lemma \ref{lem_full} applies, \(\ppi\) is even limiting.\\
We demonstrate this example in Figure \ref{fig1}.\\
 Note that, in general, there is no closed-form expression of \(\phi\) due to the convolution term appearing in its definition. Hence, we use numerical integration to evaluate \(\phi\) for this and the following two examples.
\end{example}

\begin{example}[non-smooth density]\label{ex_ns}
Another important advantage of \textsc{llmc} compared to \textsc{lmc} and also f\textsc{lmc} of \cite{FLI} is the possibility to choose non-smooth target densities. Let, for example, \(\ppi(\diff x)=\pi(x)\diff x\) with
\begin{align*}
\pi(x)&=\ee^{-0.5x}+\bone_{(2,4)}(x), 
\end{align*}
and \(\L\) be a Lévy process with characteristic triplet \((0,0,\mu)\) where
\begin{align*}
\mu(\diff x)&=x^2\ee^{-0.5x}\diff x + \delta_1.
\end{align*}
Let \(\X\) be a solution of \eqref{eq_sde} with \(\phi\) as in \eqref{eq_driftcpn}. As for the previous example, \textbf{(b1)}, \textbf{(b2)}, and \textbf{(c2)} are met, and \(\ppi\) is invariant and limiting for \(\X\) by Theorem \ref{thm_cpn} and Lemma \ref{lem_full}.\\
This, too, is displayed in Figure \ref{fig1}.
\end{example}

\begin{example}[\textsc{Dresden Frauenkirche}]
To illustrate that our result also covers target densities with lots of detail we consider the density \(\pi\) as in Figure 2 (left) which represents the silhouette of the \textsc{Dresden Frauenkirche}, continued by an exponential tail. We manufactured \(\pi\) in a way such that \textbf{(b1)} is met. As driving noise we choose a spectrally positive Lévy process with characteristic triplet \((0,0,\mu)\) where
\begin{align*}
\mu(\diff x)=\bone_{\{x>0\}}\ee^{-\frac{x^2}2}\diff x.
\end{align*}
Let \(\X\) be a solution of \eqref{eq_sde} with \(\phi\) as in \eqref{eq_driftcpn}. As for both prior examples, \(\ppi\) is clearly invariant and limiting for \(\X\). The density function, the sampled distribution and an exemplary sample path of \(\X\) can be seen in Figure \ref{fig2}.\\
Taking a closer look we see that the process \(\X\) slows down considerably upon entering the interval \((0,7.5)\) on which most of the mass of \(\ppi\) concentrates. In general, the drift coefficient takes on large values in areas of small mass and small values in areas of high mass. This stems from \(\pi\) appearing in the denominator of  \eqref{eq_driftcpn}, and can be observed by inspecting the slopes of the sample path in Figure 2.\\
Moreover, the process becomes slower the closer it gets to the origin. On the one hand, this is due to Assumption \textbf{(b1)} (and \textbf{(a2)}, respectively), and ensures that \(0\) cannot be reached in finite time. On the other hand, this slowing down is caused by the convolution with the signed tail function \(\overline\mu_s\) in the nominator of \eqref{eq_driftcpn}.\\
This is reasonable: Because jumps go only upwards it is \emph{less likely} for \(\X\) to reach the area of the left half of the silhouette (approximately the interval \((0,3.75)\)) than to appear in the area of its right half. But since both sides are symmetrical the drift must compensate for that.
\end{example}

\section{Proof of Theorem \ref{thm_cpn}}\label{sec_prf}

n the following, whenever constants \(C,C'\) or \(\alpha\) appear in the proof below, we mean the constants of Assumption \textbf{(b1)}.

\subsubsection*{Proof of Theorem \ref{thm_cpn} (i): Positive Harris recurrence}
The Foster-Lyapunov method of \cite{MTI} - \cite{MTIII} is tailored for processes which cover the whole real line. Hence, we consider the auxiliary process \(Y_t=s(X_t)\) where \(s:(0,\infty)\to\real\) is a smooth strictly monotone function such that \(s(x)=\ln(x)\) for \(x\in(0,1-\varepsilon)\) and \(s(x)=x\) for \(x\in(1+\varepsilon,\infty)\) where \(0<\varepsilon<\ee^{-1}\) is some constant. Clearly, \(\X\) is positive Harris recurrent if and only if \(\Y\) is positive Harris recurrent.\\
Central to this method are the so-called \emph{norm-like functions} whose precise definition needs additional notation: For \(m\in\nat\) denote \(O_m:=(\ee^{-m},m)\) and choose \(h_m\in\testo\) such that \(0\leq h_m\leq 1\), and with \(h_m(x)=1\) for all \(x\in O_m\), and \(h_m(x)=0\) for all \(x\in O_{m+1}^c\). Let \(\Xmm\) be the unique strong solution of
\begin{align*}
\diff \mathcal{X}^{(m)}_t = h_m(\mathcal{X}^{(m)}_{t-})\left(\phi(\mathcal{X}^{(m)}_{t-})\diff t + \diff L_t\right),\quad \mathcal{X}^{(m)}_0>0.
\end{align*}
We set \(\mathcal{Y}^{(m)}_t:=s(\mathcal{X}^{(m)}_t)\). Clearly, this construction implies that for all \(m\in\nat\) if \(\mathcal{X}^{m}_0\in O_m\) it holds \(\mathcal{Y}^{(m)}_t = Y_t\) for all \(t< T_m:=\inf\{s\geq0: |Y_s|\geq m\}\). For \(m\in\nat\) denote by \((\cG_m,\cD(\cG_m))\) the extended generator of \(\Ymm\). A function \(f:\real\to\real_+\) is called \emph{norm-like} w.r.t. \(\Y\) if
\begin{enumerate}
\item \(f(x)\to\infty\) as \(x\to\pm\infty\), and
\item \(f\in\cD(\cG_m)\) for all \(m\in\nat\).
\end{enumerate}
It is typical for the Foster-Lyapunov method that one only requires a single norm-like function (sometimes also called Foster-Lyapunov function) which fulfills a certain inequality. Our particular choice is presented in the lemma below. \\
In the following, to make notation easier, we denote \(y:=s(x)\) if \(x\in(0,\infty)\) is given, and \(x:=s^{-1}(y)\) if \(y\in\real\) is given.
\begin{lemma}\label{lem_norm}
Let \(f\in\cC^1(\real)\) with \(f(y)\in[1+|y|,2+|y|]\) for all \(y\in\real\), and \(f(y)=1+|y|\) for all \(|y|>\varepsilon\) for some \(\varepsilon>0\). Then \(f\) is norm-like w.r.t. \(\Y\).\\
Moreover, for all \(m\in\nat\) and \(y\in(-m,m)\) it holds
\begin{align}\label{eq_Gmf}
\cG_m f(y)=\phi(x) f_0'(x) + \int_{(0,\infty)}(f_0(x+z)-f_0(x)) \mu(\diff z).
\end{align}
where \(f_0(x):=f(y)=f(s(x))\).
\end{lemma}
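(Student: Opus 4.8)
The plan is to verify the two conditions in the definition of a norm-like function and to read off \eqref{eq_Gmf} along the way, all from a single application of It\^o's formula to the finite-activity jump process $\mathcal X^{(m)}$. Condition (i) is immediate, since $f(y)\ge 1+|y|$ forces $f(y)\to\infty$ as $y\to\pm\infty$; the substance is condition (ii), that $f\in\mathcal D(\mathcal G_m)$ for every $m$. Throughout I would write $f_0:=f\circ s$; since $s\in\cC^\infty((0,\infty))$ and $f\in\cC^1(\real)$ we have $f_0\in\cC^1((0,\infty))$ with $f_0'=(f'\circ s)\,s'$, the derivative $f'$ is bounded on $\real$ (it equals $\sgn$ off $[-\varepsilon,\varepsilon]$ and is continuous on it), and $|f(y_1)-f(y_2)|\le 1+|y_1-y_2|$ for all $y_1,y_2$ because $f$ lies in the band between $1+|\cdot|$ and $2+|\cdot|$.

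First I would apply It\^o's formula for finite-activity jump processes (as in the proof of Theorem \ref{thm_infinv}; cf. \cite[Thm.~2.50]{schnurr2009symbol}) to $f_0(\mathcal X^{(m)}_t)$. Writing $L_t=\sum_{i\le N_t}\xi_i$, the process $\mathcal X^{(m)}$ follows the ODE $\dot x=h_m(x)\phi(x)$ between successive jumps of $L$ and moves by $h_m(\mathcal X^{(m)}_{\tau-})\Delta L_\tau$ at a jump time $\tau$, and no second-order term appears (there is no continuous martingale part and $f_0\in\cC^1$). This gives, with $\mathcal X^{(m)}_0=x$,
\begin{align*}
f_0(\mathcal X^{(m)}_t)=f_0(x)+\int_0^t h_m(\mathcal X^{(m)}_s)\phi(\mathcal X^{(m)}_s)f_0'(\mathcal X^{(m)}_s)\,\diff s+J_t ,
\end{align*}
where $J_t:=\sum_{s\le t}\bigl(f_0(\mathcal X^{(m)}_{s-}+h_m(\mathcal X^{(m)}_{s-})\Delta L_s)-f_0(\mathcal X^{(m)}_{s-})\bigr)$, and the natural candidate for the generator value at $x$ is
\begin{align*}
\widetilde g(x):=h_m(x)\phi(x)f_0'(x)+\int_{(0,\infty)}\bigl(f_0(x+h_m(x)z)-f_0(x)\bigr)\mu(\diff z).
\end{align*}

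The heart of the argument is to show that $\widetilde g$ is bounded on $(0,\infty)$. It vanishes off $\supp h_m\subseteq\overline{O_{m+1}}=[\ee^{-(m+1)},m+1]$, so it suffices to bound it on $\overline{O_{m+1}}$. On this compact interval $\pi$ meets only finitely many pieces of its partition (which lies in $\cP$ and thus accumulates only at $0$), on each of which $\pi$ is the restriction of a positive continuous $W^{1,1}$-function; hence $\pi$ is bounded above and below away from $0$ on $\overline{O_{m+1}}$, and since $\overline\mu_s*\pi$ is bounded (indeed by $|\mu|$, as $\pi$ is a density and $\overline\mu\le|\mu|<\infty$) this bounds $|\phi|$ there; $|f_0'|$ is bounded there as well. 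For the integral term, positivity of the jumps gives $x+h_m(x)z\ge x\ge\ee^{-(m+1)}$, so $s$ is Lipschitz on the relevant range with some constant $K_m$ and $|f_0(x+h_m(x)z)-f_0(x)|\le 1+K_m h_m(x)z$, whence the integral is at most $|\mu|+K_m\int_{(0,\infty)}z\,\mu(\diff z)<\infty$ by \textbf{(b2)}. Granting $\|\widetilde g\|_\infty<\infty$, the required integrability $\int_0^t\EE^x|\widetilde g(\mathcal X^{(m)}_s)|\diff s\le t\|\widetilde g\|_\infty$ is automatic, and the same bound on the jumps of $J$ together with $\int_{(0,\infty)}(1\vee z)\mu(\diff z)<\infty$ lets the compensation formula for the jumps of $L$ apply, giving $\EE^xJ_t=\EE^x\int_0^t\int_{(0,\infty)}(f_0(\mathcal X^{(m)}_s+h_m(\mathcal X^{(m)}_s)z)-f_0(\mathcal X^{(m)}_s))\mu(\diff z)\,\diff s$. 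Taking expectations in the displayed identity then yields $\EE^x f_0(\mathcal X^{(m)}_t)=f_0(x)+\EE^x\int_0^t\widetilde g(\mathcal X^{(m)}_s)\,\diff s$; rewriting via $\mathcal Y^{(m)}_t=s(\mathcal X^{(m)}_t)$ and $y=s(x)$ shows $f\in\mathcal D(\mathcal G_m)$ with $\mathcal G_m f=\widetilde g\circ s^{-1}$, so $f$ is norm-like. Finally, for $y\in(-m,m)$ the point $x=s^{-1}(y)$ lies in $O_m$, where $h_m\equiv1$, so $\widetilde g(x)$ reduces to $\phi(x)f_0'(x)+\int_{(0,\infty)}(f_0(x+z)-f_0(x))\mu(\diff z)$, which is \eqref{eq_Gmf}.

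The hard part will be the boundedness of $\widetilde g$: extracting a positive lower bound for $\pi$ on the compact interval $\overline{O_{m+1}}$ from the piecewise-$W^{1,1}$ hypothesis of \textbf{(b1)} (so that $\phi$ is bounded there), and taming the jump term via the first-moment condition $\int_{(0,\infty)}(1\vee z)\mu(\diff z)<\infty$ of \textbf{(b2)}. With those two bounds in hand, It\^o's formula for finite-activity jump SDEs, the compensation argument turning $J$ into an expectation, and the change of variables $x\leftrightarrow y$ are all routine.
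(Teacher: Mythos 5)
Your proposal is correct and follows essentially the same route as the paper: Itô's formula for the stopped finite-activity process, a moment bound on the jump increments of the form $|f_0(x+h_m(x)z)-f_0(x)|\leq \text{const}\vee(\text{const}\cdot z)$ combined with $\int_{(0,\infty)}(1\vee z)\,\mu(\diff z)<\infty$ to justify compensating the jump measure, boundedness of the resulting candidate $\widetilde g$ (which vanishes off $[\ee^{-(m+1)},m+1]$), and finally reading off \eqref{eq_Gmf} from $h_m\equiv 1$ on $O_m$. The only differences are presentational: you verify condition (i) explicitly and spell out why $\phi$ is bounded on compacts via \textbf{(b1)}, where the paper simply observes that $g$ is continuous in the jump term and compactly supported.
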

\begin{proof}
Fix \(m\in\nat\). Itô's formula yields
\begin{align}\label{eq_genito}\nonumber
\EE^y[f(\mathcal{Y}^{(m)}_t)-f(y)] &= \EE^{x}[f_0(\mathcal{X}^{(m)}_t)-f_0(x)]\\\nonumber
&=\EE^{x}\Big[\int_0^t h_m(\mathcal{X}^{(m)}_{s-}) \phi(\mathcal{X}^{(m)}_{s-})f_0'(\mathcal{X}^{(m)}_{s-})\diff s  \\
&\quad +\int_{z\neq0}\int_0^t \left(f_0(\mathcal{X}^{(m)}_{s-}+h_m(\mathcal{X}^{(m)}_{s-})z)- f_0(\mathcal{X}^{(m)}_{s-})\right)\widetilde\mu(\cdot,\diff s,\diff z)\Big]
\end{align}
where \(\widetilde\mu\) is the jump measure of \(\L\). To verify whether the jump measure may be replaced by the compensator under the expectation, and to subsequently swap the order of integration, we need some estimates. Clearly, \(f_0(x+h_m(x)z)-f_0(x)=0\) for all \(z>0\), and \(x\notin[\ee^{-m-1},m+1]\). On the other hand, for all \(x\in[\ee^{-(m+1)},m+1]\) there exists \(M>0\) such that
\begin{align*}
|f_0(x+h_m(x)z)-f_0(x)| \leq M\vee z
\end{align*}
by the definition of \(f_0\). Hence, by \cite[Thm. 2.21]{schnurr2009symbol} and the fact that \(\int_0^\infty (1\vee z) \mu(\diff z)<\infty\) by Assumption \textbf{(b2)}, \(\widetilde\mu~(\cdot,\diff s,\diff z)\) may be replaced by \(\diff s\mu(\diff z)\) under the expectation in \eqref{eq_genito}. \\
Applying Fubini's theorem and reversing the space transform, i.e. going back to \(\Ym\), we obtain
\begin{align*}
\EE^y[f(\mathcal{Y}^{(m)}_t)-f(y)] &=\EE^{x}\Big[\int_0^t h_m(\mathcal{X}^{(m)}_{s-}) \phi(\mathcal{X}^{(m)}_{s-})f_0'(\mathcal{X}_{s-})\diff s  \\
&\quad +\int_0^t \int_{z\neq0}\left(f_0(\mathcal{X}^{(m)}_{s-}+h_m(\mathcal{X}^{(m)}_{s-})z)- f_0(\mathcal{X}^{(m)}_{s-})\right)\mu(\diff z)\diff s\Big]\\
&=\EE^{y} \left[\int_0^tg(\mathcal{Y}^{(m)}_{s-})\diff s\right]
\end{align*}
where 
\begin{align}\label{eq_Gmf2}\nonumber
g(y)&:= h_m(x)\phi(x) f_0'(x)\\
&\qquad + \int_{z\neq0} (f_0(x+h_m(x)z)-f_0(x)) \mu(\diff z).
\end{align}
This function is clearly measurable. We observe that the integral term is continuous in \(y\) and vanishes for \(|y|\geq m+1\). Therefore, \(g\) is bounded and Tonelli's theorem is applicable yielding
\begin{align*}
\left|\EE^{y} \left[\int_0^tg\left(\mathcal{Y}^{(m)}_{s-}\right)\diff s\right]\right|\leq\int_0^t\EE^{y} \left[\left|g\left(\mathcal{Y}^{(m)}_{s-}\right)\right|\right]\diff s \leq \left\|g\right\|_\infty t <\infty
\end{align*}
for all \(y\in\real\) and all \(t\geq0\). Hence, \(f\in\cD(\cG_m)\) for all \(m\in\nat\). This completes the proof as we observe that \eqref{eq_Gmf} follows from the definition of the extended generator and upon realizing that the representation in \eqref{eq_Gmf2} agrees with \eqref{eq_Gmf} for all \(y\in(-m,m)\).
\end{proof}
The second key ingredient of the Foster-Lyapunov method is the following: A set \(K\subset\real\) is called \emph{petite} for a Markov process \((Y_t)_{t\geq0}\) if there exists a distribution \(a\) on \((0,\infty)\) and a non-trivial measure \(\varphi\) on \(\cB(\real)\) such that for all \(y\in K\) and \(B\in\cB(\real)\)
\begin{align*}
\int_{(0,\infty)} \PP^y(Y_t\in B)a(\diff t) \geq \varphi(B).
\end{align*}
\begin{lemma}\label{lem_pet}
All compact sets \(K\subset\real\) are petite for \(\Y\).
\end{lemma}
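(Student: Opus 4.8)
The plan is to obtain the entire minorisation from the event that $\L$ — hence $\X$ — makes no jump, and to transport it through $s$. Since $s:(0,\infty)\to\real$ is a smooth strictly monotone bijection, every compact $K\subset\real$ is contained in $s([a,b])$ for suitable $0<a<b$, and a subset of a petite set is petite; moreover, if $a$ is a distribution on $(0,\infty)$ and $\varphi$ a non-trivial measure with $\int_{(0,\infty)}\PP^x(X_t\in A)\,a(\diff t)\geq\varphi(A)$ for all $x\in[a,b]$ and $A\in\cB((0,\infty))$, then $a$ together with the push-forward $\varphi\circ s^{-1}$ witnesses that $s([a,b])$ is petite for $\Y$. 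So it suffices to fix $0<a<b$ and construct such $a,\varphi$ for $\X$ on $[a,b]$. Writing $\psi_t(x)$ for the position at time $t$ of the curve solving $\dot z=\phi(z)$, $z(0)=x$ — strictly decreasing in $t$ because $\phi<0$ on $(0,\infty)$, and coinciding with a solution of \eqref{eq_sde} on $\{N_t=0\}$ — and using $\PP(N_t=0)=\ee^{-|\mu|t}$, I would start from
\begin{align*}
\PP^x(X_t\in A)\geq\ee^{-|\mu|t}\,\bone_{\{\psi_t(x)\in A\}},\qquad A\in\cB((0,\infty)).
\end{align*}

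Next I would fix a good reference interval. From \eqref{eq_driftcpn}, $\overline\mu_s*\pi(x)=\int_0^x\mu(u,\infty)\pi(x-u)\,\diff u$ is continuous and strictly positive on $(0,\infty)$, hence bounded below by a positive constant on each compact subinterval; since $\pi$ is a probability density, the curve started at any $x\in[a,b]$ reaches a level $w_0\in(0,a)$ in time $\int_{w_0}^x\pi(z)/(\overline\mu_s*\pi(z))\,\diff z\leq\int_{w_0}^b\pi(z)/(\overline\mu_s*\pi(z))\,\diff z<\infty$, uniformly in $x$ (this is the computation behind \eqref{eq_subor}, which also guarantees the curve never reaches $0$). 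The partition points of $\pi$ accumulate only at $0$, so we may choose an open interval $J=(w_0,w_1)$ with $\overline J\subset(0,a)$ meeting no partition point; on the weak-differentiability piece containing $\overline J$ the continuous representative of $\pi$ is positive, so $\pi\geq\epsilon>0$ on $\overline J$, and since $|\overline\mu_s*\pi|\leq|\mu|$ this yields $|\phi|\leq|\mu|/\epsilon=:M$ on $\overline J$. Finally fix $T$ so large that $\psi_T(x)<w_0$ for all $x\in[a,b]$.

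Now the substitution closes the argument. For fixed $x\in[a,b]$ one has $J\subset(\psi_T(x),x)$, so for $A\subset J$ the change of variables $u=\psi_t(x)$, $\diff u=\phi(u)\,\diff t$ (the curve hits each $u\in A$ at a time $\tau(x,u)\leq T$) gives
\begin{align*}
\int_0^T\PP^x(X_t\in A)\,\diff t\geq\int_0^T\ee^{-|\mu|t}\bone_{\{\psi_t(x)\in A\}}\,\diff t=\int_A\frac{\ee^{-|\mu|\tau(x,u)}}{|\phi(u)|}\,\diff u\geq\frac{\ee^{-|\mu|T}}{M}\,\lambda^{\mathrm{Leb}}(A).
\end{align*}
Applying this to $A\cap J$ for a general $A\in\cB((0,\infty))$, and setting $a(\diff t):=T^{-1}\bone_{(0,T)}(t)\,\diff t$ and $\varphi(A):=\tfrac{\ee^{-|\mu|T}}{MT}\lambda^{\mathrm{Leb}}(A\cap J)$ — a non-trivial measure independent of $x$ — we get $\int_{(0,\infty)}\PP^x(X_t\in A)\,a(\diff t)\geq\varphi(A)$ for all $x\in[a,b]$, and transporting through $s$ as above finishes the proof.

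The substitution and the bookkeeping with $a$ and $\varphi$ are routine. The point that needs care is the choice of $J$: it must lie strictly inside a single differentiability piece of $\pi$ and strictly below $a$, so that $\phi$ is genuinely bounded on $\overline J$ (here $\pi$ could a priori degenerate to $0$ at a partition point, so one cannot take $J$ too large), the deterministic downward curve started anywhere in $[a,b]$ crosses $J$ before a common time $T$, and the resulting estimate — hence the minorising measure $\varphi$ — is uniform over the entire compact set.
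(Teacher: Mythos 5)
Your proof is correct and follows essentially the same route as the paper's: condition on the no-jump event via \(\PP(N_t=0)=\ee^{-|\mu|t}\), use the strictly decreasing deterministic flow, change variables in time, and minorise by Lebesgue measure on a fixed interval below the compact set. The only differences are presentational — you work in the \(X\)-coordinates and push forward through \(s\) at the end, and you place the minorising interval inside a single differentiability piece of \(\pi\) to make the bound on \(1/|\phi|\) (and hence the constant) explicitly uniform over the compact set, where the paper simply asserts finiteness of \(\sup|q_y'|\).
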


\begin{proof}
We start with some helpful notation. For \(y\in\real\) denote by \(q_y(\cdot)\) the solution of the autonomous differential equation
\begin{align}\label{eq_auto}
\begin{cases}
q_y'=\phi(s^{-1}(q_y))s'(s^{-1}(q_y)),\\
q_y(0)=y.
\end{cases}
\end{align}
Then \(q_y(t)\) represents the (deterministic) state \(Y_t^{y}\) under the assumption that no jump occurs in the time interval \([0,t]\), that is \(N_t=0\). The inverse function \(q_y^{-1}(y')\) exists due to \(\phi(x)<0\) for all \(x>0\). We note that it represents the time it takes to drift from \(y>0\) to \(y'\in(0,y)\), and is hence decreasing in \(y'\) and increasing in \(y\). \\
Let \(K\subset\real\) be compact, without loss of generality assume \(K=[k_1,k_2]\). Let \(a(\diff t)=\ee^{-t}\diff t\) and \(\varphi(\diff z)=c\bone_{(k_1-1,k_1)}(z)\diff z\) for some \(c>0\) which we are yet to choose. Let \(y\in K\) and \(B\in\cB(\real)\). Using \(\PP(N_t=0)=\ee^{-t|\mu|}\) we compute
\begin{align*}
\int_0^\infty \PP^y(Y_t\in B) a(\diff t)
&\geq \int_0^\infty \bone_{q_y^{-1}(B)}(t) \ee^{-t(1+|\mu|)}\diff t \\
&\geq\int_0^{q^{-1}_{y}(k_1-1)} \bone_{q_y^{-1}(B)}(t) \ee^{-t(1+|\mu|)}\frac{q_y'(t)}{q_y'(t)}\diff t \\
&\geq\frac1{\sup\{|q_y'(t)|: t\leq q^{-1}_{y}(k_1-1)\}}\int_{k_1-1}^y \bone_B(z)\ee^{-q_y^{-1}(z)(1+|\mu|)}\diff z\\
&\geq\frac1{\sup\{|q_y'(t)|: t\leq q^{-1}_{y}(k_1-1)\}}\int_{k_1-1}^{k_1} \bone_B(z)\ee^{-q_y^{-1}(z)(1+|\mu|)}\diff z\\
&\geq \frac{\exp\{-q^{-1}_{k_2}(k_1-1)(1+|\mu|)\}}{\sup\{|q_y'(t)|: t\leq q^{-1}_{y}(k_1-1)\}}\int_{k_1-1}^{k_1} \bone_B(z)\diff z.
\end{align*}
For the third inequality we substituted \(z:=q_y(t)\) and used the fact that for all \(y,k_1\in\real\) it holds \(\sup\{|q_y'(t)|: t\leq q^{-1}_{y}(k_1-1)\}<\infty\). Indeed, this is implied by \eqref{eq_auto}, and the properties of \(\phi\) and \(s\). The fourth inequality is due to the reduction of the area of integration while the fifth inequality uses the monotonicity properties of \(q^{-1}\) described above. Lastly, choosing 
\begin{align*}
c:=\frac{\exp\{-q^{-1}_{k_2}(k_1-1)(1+|\mu|)\}}{\sup\{|q_y'(t)|~:~t~\leq~ q^{-1}_{y}(k_1-1)\}}
\end{align*}
 finishes the proof.
\end{proof}

Finally, we are ready to prove the first claim of Theorem \ref{thm_cpn}.
\begin{proof}[Proof of Theorem \ref{thm_cpn} (i)]
We show that there exist some positive constants \(c,d>0\) and a closed petite set \(K\subset \real\) such that
\begin{align}\label{eq_fle}
\cG_mf(y)\leq -c + d\bone_K (y)
\end{align}
for all  \(m\in\nat\) and \(y\in(-m,m)\). Then \cite[Thm. 4.2]{MTIII} implies that \(\Y\) is positive Harris recurrent, and therefore, \(\X\) is positive Harris recurrent as well.\\ Clearly, the function
\begin{align}\label{eq_genfun}
\cG_m f: y\mapsto \phi(x)f_0'(x) + \int_{(0,\infty)}(f_0(x+z)-f_0(x))\mu(\diff z)
\end{align}
is continuous, and bounded on \((-m,m)\). \\
Hence, \eqref{eq_fle} follows if we can show that \(\limsup_{y\to\pm\infty} \cG_mf(y)\leq -c\) for some \(c>0\). We start with \(y\to+\infty\). Note that for \(y\gg1\) we have \(x=s^{-1}(y)=y\), and, on the one hand \(f_0'(y)=1\), and, on the other hand \(f_0(y+z)-f_0(y)=z\).\\
With Assumption \textbf{(b1)} we obtain
\begin{align*}
\limsup_{y\to+\infty} \phi(y)=-\liminf_{y\to+\infty}(\overline\mu_s*\pi(y))\frac{\ee^{\alpha y}}C \leq -\liminf_{y\to+\infty} \frac1C\int_{(0,M)}\overline\mu_s(z)\pi(y-z)\ee^{\alpha y}\diff z
\end{align*}
for all arbitrary, but fixed \(M>0\). Thus, also with Assumption \textbf{(b1)}, 
\begin{align*}
\limsup_{y\to+\infty} \phi(y)f_0'(y) &\leq -\liminf_{y\to+\infty} \int_{(0,M)}\overline\mu_s(z)\ee^{\alpha z}\diff z = -\int_{(0,M)}\overline\mu_s(z)\ee^{\alpha z}\diff z
\end{align*}
Since \(M>0\) was arbitrary this yields \(\limsup_{y\to+\infty} \phi(y)f_0'(y)< \int_{(0,\infty)}\overline\mu_s(z)\diff z = - \EE \xi_1\). Now, for the second term of \eqref{eq_genfun} we observe that for \(x=y\gg1\)
\begin{align*}
\int_{(0,\infty)}(f_0(x+z)-f_0(x))\mu(\diff z)&=
\int_{(0,\infty)}z \mu(\diff z) =\EE\xi_1.
\end{align*}
Consequently, there exists \(c>0\) such that \(\cG_m f(y)< -c <0\) for \(y\gg1\).\\
Next, consider the behavior for \(y\to-\infty\), and start with the observation that for \(y\ll-1\) one has \(x=s^{-1}(y)=\ee^y\), and \(f_0'(x)=\ee^{-y}\).
With the definition of \(\phi\) and Assumption \textbf{(b1)} we obtain 
\begin{align*}
|\phi(\ee^y)|\leq C'|\mu|\ee^y
\end{align*}
for \(y\ll-1\). Therefore, \(\phi(\ee^y)f_0'(\ee^y)\) is bounded for \(y\ll-1\).\\
Finally, to find a suitable estimate for the second term of \eqref{eq_genfun} for \(y\ll-1\) we fix \(M>0\) such that \(\mu([M,\infty))>0\). Observe that for \(y\ll-1\) it holds \(f_0(\ee^y+z)-f_0(\ee^y)<0\) for all \(z\in(0,M)\). Further, there exists \(K>0\) such that \(f_0(\ee^y+z)<M'+z\) for all \(z\in[M,\infty)\). We then compute
\begin{align*}
\int_{(0,\infty)}(f_0(\ee^y+z)-f_0(\ee^y)) \mu(\diff z)&=  \int_{(0,M)}(f_0(\ee^y+z)-f_0(\ee^y)) \mu(\diff z)\\
&\quad +\int_{[M,\infty)}(f_0(\ee^y+z)-f_0(\ee^y)) \mu(\diff z)\\
&\leq \int_{[M,\infty)}(K+z)\mu(\diff z) - f_0(\ee^y)\mu[M,\infty).
\end{align*}
As \(\int_{(0,\infty)}z\mu(\diff z)<\infty\) this implies
\begin{align*}
\lim_{y\to-\infty}\int_{(0,\infty)}(f_0(\ee^y+z)-f_0(\ee^y) \mu(\diff z)=-\infty,
\end{align*}
and therefore, \(\cG_m f(y)< -c\) for \(y\ll-1\) with the same \(c\) as above. This completes the proof.
\end{proof}

\subsubsection*{Proof of Theorem \ref{thm_cpn} (ii): Invariant distributions are infinitesimally invariant}

\begin{proof}[Proof of Theorem \ref{thm_cpn} (ii)]
The claim follows from \cite[Cor. 5.4]{behmeoechsler} if we can show that \(\frac1t\left|\EE^xf(X_t)-f(x)\right|<\infty\) for all \(f\in\cC_c^\infty(0,\infty)\) and all \(t\geq0\).\\
Analogously to the proof of Lemma \ref{lem_norm}, a straight-forward application of Itô's formula yields
\begin{align*}
\frac1t\left|\EE^xf(X_t)-f(x)\right| = \frac1t \left|\EE^x \int_0^t g(X_{s-})\diff s\right|
\end{align*}
for \(f\in\cC_c^\infty(0,\infty)\), where
\begin{align*}
g(x)=\phi(x)f'(x)+\int_{(0,\infty)}(f(x+z)-f(x))\mu(\diff z).
\end{align*}
Clearly, if \(f\in\cC_c^\infty(0,\infty)\), then \(g\) is bounded and it follows
\begin{align*}
\frac1t\left|\EE^xf(X_t)-f(x)\right|\leq \|g\|_\infty <\infty.
\end{align*}
\end{proof}

\subsubsection*{Proof of Theorem \ref{thm_cpn} (iii): Uniqueness of the invariant distribution}

For the third assertion we require one of the additional assumptions. As described above we start with Assumption \textbf{(c1)}, i.e. there exists \(n\in\nat\) such that \(\supp\mu\subset(1/n,n)\).
\begin{proof}[Proof of Theorem \ref{thm_cpn} (iii) under \textbf{(c1)}]
It has been shown in \cite[Thm. 4.2]{behmeoechsler} that any infinitesimally invariant measure \(\eeta\) of a solution \(\X\) of \eqref{eq_sde} necessarily solves the distributional equation
\begin{align}\label{eq_1}
-(\phi \eeta) ' + \mu * \eeta -|\mu|\eeta=0
\end{align}
on \((0,\infty)\). To show that there exists only one probability distribution solving \eqref{eq_1} we first need some regularity properties for \(\phi\). A straight-forward calculation yields that for all \(x\geq0\) the representation
\begin{align}\label{eq_phi1}
\phi(x)=\frac{\int_0^x (\mu*\pi(z)-|\mu|\pi(z))\diff z}{\pi(x)}
\end{align}
holds. As \(\pi\) is piecewise weakly differentiable and \(\pi(x)>0\) for \(x>0\) it follows that \(1/\pi\) is piecewise weakly differentiable as well. Thus, \(\phi\) is piecewise weakly differentiable w.r.t. the same partition as \(\pi\), since the numerator of the right-hand side of \eqref{eq_phi1} is the primitive of a locally integrable function, and as such contained in \(\sobolev\). Further, as \(\phi(x)<0\) for all \(x>0\) we infer that at least \(1/\phi\in L^1_\loc(0,\infty)\).\\
This property of \(\phi\) allows us to transform \eqref{eq_1} into
\begin{align}\label{eq_2}
\eeta'=\frac{\mu*\eeta-\phi'\eeta-|\mu|\eeta}{\phi}.
\end{align}
We note that the right-hand side of \eqref{eq_2} defines a Schwartz distribution (cf. \cite[Lem. 2.2]{behmeoechsler}) if \(\eeta\) is a real-valued Radon measure which we can assume as we are only looking for solutions which are probability distributions. More importantly, in this case the right-hand side of \eqref{eq_2} is even a Schwartz distribution of order \(0\), i.e. it can be identified with some real-valued Radon measure on \((0,\infty)\).\\
In summary, the distributional derivative of \(\eeta\) can be identified with a real-valued Radon measure which implies that \(\eeta\) itself can be identified with a locally integrable function. But now, if we insert a locally integrable function \(\eeta\) into the right-hand side of \eqref{eq_2} we obtain a locally integrable function plus a discrete measure with atoms at the discontinuities of \(\phi\). Integrating on both sides of \eqref{eq_2} tells us that any solution of \eqref{eq_1} is piecewise weakly differentiable w.r.t. the same partition as \(\pi\).\\
Let \(n\in\nat\) such that \(\supp \mu\subset(1/n,n)\). To solve \eqref{eq_1} we integrate both sides and obtain
\begin{align}\label{eq_3}
-\phi\eeta + \overline\mu_s*\eeta=c_1
\end{align}
for some \(c_1\in\real\). Observe that for \(x\gg1\) we have
\begin{align*}
|\phi(x)| \leq |\mu|\int_{1/n}^n\frac{ \pi(x-z)}{\pi(x)}\diff z\leq |\mu|\ee^{\alpha n},
\end{align*}
by Assumption \textbf{(b1)}. Thus, \(\limsup_{x\to\infty}|\phi(x)|<\infty\). From Young's convolution inequality it follows that \(\|\overline\mu_s*\eeta\|_1\leq \|\overline\mu_s\|_1\|\eeta\|_1<\infty\). Thus, for any absolutely continuous measure \(\eeta\) the left-hand side of \eqref{eq_3} can be identified with an element of \(L^1(\real_+)\). Consequently, \(c_1=0\).\\
Above we have seen that any probability distribution \(\eeta\) solving \eqref{eq_1} is absolutely continuous. Denoting by \(H(x):=\eeta((0,x])\) the cumulative distribution function of \(\eeta\) we thus know that the density function \(H'\) of \(\eeta\) is integrable. If we once again use the fact that \(\int_0^x (\mu*H'(z)-|\mu|H'(z))\diff z = \overline\mu_s*(H')(x)\), and that \(\mu*(H')(x)=0\) for \(x\in(0,1/n]\), we obtain from \eqref{eq_3} the equation
\begin{align}\label{eq_4}
H'(x) = -\frac{|\mu|H(x)}{\phi(x)}, \qquad x\in(0,1/n].
\end{align}
 Caratheodory's theorem (cf. \cite[Thm. 5.3]{hale}) implies that \eqref{eq_4} has for each initial value \(H(\varepsilon)=c_2\in\real\) a unique solution. Clearly, for arbitrary but fixed \(c_2\in\real\) the solution of \eqref{eq_4} is given by \(H(x)=\frac{c_2}{F(\varepsilon)}F(x)\) where \(F(x):=\ppi(0,x]\) is the cumulative distribution function of our target distribution \(\ppi\). This is easily seen with the definition of \(\phi\). \\
For \(x>1/n\) Equation \eqref{eq_3} reads
\begin{align}\label{eq_5}
H'(x)=\frac{\int_0^x\mu*(H')(z)\diff z-|\mu|H(x)}{\phi(x)}.
\end{align}
From Assumption \textbf{(c1)} it follows that for all \(0<x<b\) and any function \(f\in L^1(\real_+)\) holds 
\begin{align*}
\mu*f(x)=\mu*(f|_{\left[0,b-\frac1n\right]})(x).
\end{align*}
Consider Equation \eqref{eq_5} on \([m/n,(m+1)/n]\) for some \(m\in\nat\). As initial condition we assume \(H(x)=c_3F(x)\) for all \(x\in(0,m/n]\) and some \(c_3\in\real\). This results in the equation
\begin{align*}
H'(x)=\frac{c_3\int_0^x\mu*\pi(z)\diff z-|\mu|H(x)}{\phi(x)}, \qquad x\in[m/n,(m+1)/n]
\end{align*}
for which Caratheodory's theorem again ensures a unique solution. Hence, by induction over \(m\) and subsequent normalization it follows that \(\ppi\) is the unique infinitesimally invariant distribution of \(\X\).\\
Finally, Theorem \ref{thm_cpn} (i), that is positive Harris recurrence, implies existence and uniqueness of an invariant distribution (cf. \cite[Sec. 4]{MTIII}). But this unique distribution must be \(\ppi\) due to Theorem \ref{thm_cpn} (ii). This proves the claim.
\end{proof}

\begin{proof}[Proof of Theorem \ref{thm_cpn} (iii) under \textbf{(c2)}]
Assume \(\phi\) is piecewise locally Lipschitz continuous. In this case we show the assertion by approximating \(\X\) with a sequence of processes meeting Assumption \textbf{(c1)}. \\
Recall that \(L_t:= \sum_{i=0}^{N_t}\xi_i\), and define
\begin{align*}
L^{(n)}_t&:=\sum_{i=0}^{N_t}\xi_i\bone_{\left\{\frac1n<\xi_i<n\right\}}
\end{align*}
for all \(n\in\nat\). Evidently, \(\Ln\) is a Lévy process with characteristic triplet \((0,0,\mu^{(n)})\) where \(\mu^{(n)}(B)=\mu(B\cap(\frac1n,n))\) for all \(B\in\cB(0,\infty)\). To avoid the trivial case we consider only \(n\in\nat\) large enough such that \(\supp \mu\cap (\frac1n,n)\neq\emptyset\).\\
Further denote 
\begin{align*}
\phi_n(x)=-\frac{\overline\mu_s^{(n)}*\pi(x)}{\pi(x)}.
\end{align*}
Observe that for \(n\in\nat\) large enough and all \(x\in(0,\infty)\) it holds
\begin{align*}
|\phi(x)-\phi_n(x)|=\frac{(\overline\mu_s-\overline\mu_s^{(n)})*\pi(x)}{\pi(x)} \leq \mu((0,1/n]\cup[n,\infty)) \frac{\int_0^x \pi(z)\diff z}{\pi(x)}.
\end{align*}
 With \(\pi\) being a probability density the numerator is bounded. Using Assumption \textbf{(b1)} and the fact that \(\pi\) is bounded away from zero on compact intervals in \((0,\infty)\) reveals that for all compact sets \(K\subset[0,\infty)\) there exists \(c>0\) such that
\begin{align}\label{eq_estimatephi}
|\phi(x)-\phi_n(x)| \leq c \mu((0,1/n]\cup[n,\infty)) 
\end{align}
for all \(x\in K\). \\
For \(n\in\nat\) large enough we consider now solutions \(\Xn\) of the stochastic differential equations
\begin{align}\label{eq_sde_aux}
\diff X^{(n)}_t = \phi_n(X^{(n)}_{t-})\diff t + L^{(n)}_t, \quad X^{(n)}_0\sim \ppi.
\end{align}
Note that, by construction, the processes \(\X\) and \(\Xn, n\in\nat,\) are defined on the same probability space \((\Omega,\cA,\PP)\), and that for all \(\omega\in\Omega\) the set of jump times of \((X_t^{(n)}(\omega))_{t\geq0}\) is a subset of the jump times of \((X_t(\omega))_{t\geq0}\).\\
The proof of Theorem \ref{thm_cpn} (iii) under \textbf{(c1)} implies that \(\Xn\) is a stationary process with invariant distribution \(\ppi\). Assume \(X_0=X_0^{(n)}\), and let us show that for all \(t\geq0\) it holds \(X^{(n)}_t\to X_t\) in law for \(n\to\infty\). This type of continuous dependence on the coefficients is well-known for the case when \(\phi\) is locally Lipschitz continuous and satisfies a linear growth condition, cf. \cite[Thm. IX.6.9]{jacod}. Unfortunately, these conditions are not necessarily fulfilled in our case, as we require merely piecewise Lipschitz continuity for \(\phi\). In the following, \(\mathbf{p}\in\cP\) denotes a partition w.r.t. which \(\phi\) is piecewise Lipschitz continuous.\\
Fix \(\omega\in\Omega\) and \(T>0\), denote \(a:=X_0(\omega)\), and set
\begin{align*}
t_1:=\inf\{t\geq0: \Delta X_t(\omega)\neq0~~ \text{ or } ~~X_t(\omega)\in\mathbf{p}\}.
\end{align*} 
Further, for \(t\geq0\) denote by \(q(t):=X_t(\omega)\) and \(q_n(t):=X^{(n)}_t(\omega)\) the paths of the respective processes. On the interval \([0,t_1)\), \(q\) and \(q_n\) are governed by the autonomous integral equations
\begin{align*}
q(t)=a+\int_0^{t_1}\phi(q(s))\diff s
\end{align*}
and
\begin{align*}
q_n(t)=a+\int_0^{t_1}\phi_n(q_n(s))\diff s,
\end{align*}
respectively. Hence, for all \(t\in[0,t_1)\)
\begin{align*}
|q(t)-q_n(t)| &\leq\int_0^{t_1} |\phi(q(s))-\phi_n(q_n(s))|\diff s\\
&\leq \int_0^{t_1}|\phi(q(s))-\phi(q_n(s))|+|\phi(q_n(s))-\phi_n(q_n(s))|\diff s\\
&\leq \int_0^{t_1} \ell|q(s)-q_n(s)| \diff s + c\mu((0,1/n]\cup[n,\infty)) 
\end{align*}
for some constants \(\ell,c>0\). This is due to the estimate in \eqref{eq_estimatephi}, the fact that \(q\) is strictly decreasing, and to the piecewise Lipschitz continuity of \(\phi\). For the latter we note that \(\phi\leq\phi_n\) which implies \(q(t)\leq q_n(t)\) for all \(t\in[0,t_1]\). In other words, if \(q\) reaches a discontinuity of \(\phi\) at \(t_1\), i.e. \(q(t_1)\in\mathbf{p}\), then it reaches it ahead of \(q_n\) which allows the estimate above. \\
Grönwall's inequality (cf. \cite[Cor. I.6.6]{hale}) then yields for all \(t\in[0,t_1)\)
\begin{align}\label{eq_conv1}
|q(t)-q_n(t)|\leq c\mu((0,1/n]\cup[n,\infty)) \left(1+\int_0^{t_1}\ee^{\ell(t-s)}\diff s\right)
\end{align}
which vanishes for \(n\to\infty\).\\
Our strategy is now to iterate this step until we surpass the time \(T\). By design there are two cases: \(q\) either jumps at \(t_1\) or hits a discontinuity of \(\phi\). Note that it can be ruled out that both events occur at the same time as the probability of this happening is zero. In the same way we exclude \(q\) jumping onto a discontinuity of \(\phi\) because \(q\) is strictly decreasing, jumps are space homogeneous, and the set of discontinuities of \(\phi\) has no accumulation points in \((0,\infty)\).\\
The first case, i.e. \(q\) jumps at \(t_1\), is simple. Clearly, there exists \(N\in\nat\) such that for all \(n>N\) it holds \(\Delta L_{t_1}=\Delta L_{t_1}^{(n)}\). Consequently, for all \(\varepsilon>0\) there exists \(N'\in\nat\) such that for all \(n>N'\) it holds \(|q(t_1)-q_n(t_1)|<\varepsilon\). Choosing \(\varepsilon\) small enough ensures that \(q\) and \(q_n\) both jump into the same interval \((x_i,x_{i+1})\) of the partition w.r.t. which \(\phi\) is piecewise Lipschitz continuous. Let
\begin{align}\label{def_t2}
t_2:=\inf\{t\geq t_1: \Delta X_t(\omega)\neq0~~ \text{ or }~~ X_t(\omega)\in\mathbf{p}\}.
\end{align}
For large enough \(n\in\nat\) we obtain 
\begin{align*}
|q(t)-q_n(t)|\leq \varepsilon + \int_{t_1}^{t_2} \ell|q(s)-q_n(s)| \diff s + c\mu((0,1/n]\cup[n,\infty)).
\end{align*}
for all \(t\in[t_1,t_2)\), and some (possibly different) constants \(\ell,c>0\). Applying Grönwall's inequality again concludes this iteration step.\\
For the second case, i.e. if \(q\) hits a discontinuity of \(\phi\) at \(t_1\), we argue differently. We denote by
\begin{align*}
t_2(n):=\inf\{t\geq t_1: q_n(t)=q(t_1)\}
\end{align*}
the time at which \(q_n\) also reaches the discontinuity of \(\phi\) at \(q(t_1)\). We require some observations: First, \(t_2(n)<\infty\) for all \(n\in\nat\) large enough since \(\phi_n<0\) is bounded away from zero on compact sets \(K\subset(0,\infty)\). Second, \(q_n(t_1)\to q(t_1)\) for \(n\to\infty\) due to \eqref{eq_conv1} and the fact that in this case \(q\) and \(q_n\) are continuous on \([0,t_1]\). Third, from \(\phi_n\leq \phi_{n-1}<0\) for all \(n\) large enough it follows that \(q_n'\leq q_{n-1}'<0\) on \([t_1,t_2(n)]\).\\
Thus, \(t_2(n)\to t_1\) for \(n\to\infty\). Further, also by the continuity of \(q\), it holds \(q(t_2(n))\to q(t_1)\) for \(n\to\infty\). Hence, for every \(\varepsilon>0\) we can choose \(N\in\nat\) such that for all \(n>N\)
\begin{align*}
|q(t_2(n))-q_n(t_2(n))|=|q(t_2(n))-q(t_1)|<\varepsilon.
\end{align*}
Using the above definition \eqref{def_t2} of \(t_2\) we obtain for all \(t\in[t_2(n),t_2)\)
\begin{align*}
|q(t)-q_n(t)|\leq \varepsilon' + \int_{t_2(n)}^{t_2} \ell|q(s)-q_n(s)| \diff s +  c\mu((0,1/n]\cup[n,\infty)).
\end{align*}
Note that on \([t_2(n),t_2)\) both \(q\) and \(q_n\) act on the same interval \((x_i,x_{i+1})\) of the partition \(\mathbf{p}\) w.r.t. which \(\phi\) is piecewise Lipschitz continuous. Grönwall's inequality then shows that for all \(t\in[t_2(n),t_2)\) it holds \(q_n(t)\to q(t)\) for \(n\to\infty\). But because \(q_n\) is continuous on \([0,t_2)\) and \(t_2(n)\to t_1\) for \(n\to\infty\), this property extends to \([t_1,t_2)\).\\
Finally, iteration and the fact that \(T\) and \(\omega\) have been chosen arbitrarily yields \(X^{(n)}_t\to X_t\) almost surely for \(n\to\infty\) and all \(t\geq0\). This implies weak convergence, and therefore, \(X_t\sim\ppi\) for all \(t\geq0\), i.e. \(\ppi\) is invariant for \(\X\). By Theorem \ref{thm_cpn} (i), \(\X\) is positive Harris recurrent, and hence, \(\ppi\) is the unique invariant distribution of \(\X\).
\end{proof}

\section{Outlook}\label{sec_out}
It is only natural to try extending Theorem \ref{thm_infinv} and Theorem \ref{thm_cpn} to more general settings, e.g. higher dimensions, more complicated driving noises, or target measures with disconnected supports, heavy tails or atoms. We believe that for many of these cases similar methods to the ones we employed here yield similar results. Generally speaking, one only has to show that the process in question is positive Harris recurrent, and that there exists a unique infinitesimally invariant distribution. The remaining steps are in most instances trivial or at least easy to prove under mild conditions.\\
In the following we shortly comment on the problems one faces when trying this approach on some of the more general cases. 

\paragraph*{Jump measures with heavy tails} The restriction to light tailed jumps, that is when \(\EE\xi_1<\infty\), is only needed in the proof of Theorem \ref{thm_cpn} (i). It is not entirely clear whether additional conditions are necessary to show positive Harris recurrence in the case of heavy tailed jumps, i.e \(\EE\xi_1=\infty\). Yet, as heavy tailed jumps imply larger (negative) values of the drift coefficient \(\phi\) by the definition in \eqref{eq_driftcpn} searching a more sophisticated norm-like function is the most promising approach.

\paragraph*{Subordinators as driving noise} In Assumption \textbf{(a1)} we required \(\L\) to be a spectrally positive compound Poisson process if \(\cE=(0,\infty)\). In Theorem \ref{thm_infinv} one might also wish to allow (pure jump) subordinators for \(\L\), i.e. Lévy processes with characteristic triplet \((0,0,\mu)\) for which \(\supp\mu\subset\real_+\) and \(\int_{(0,\infty)} (1\wedge z)\mu(\diff z)<\infty\).\\
However, in this case \eqref{eq_subor} in the proof of Theorem \ref{thm_infinv} does in general not hold. Thus, one needs to find a different way of showing that \(0\notin\cO\), e.g. by proving that \((\ln(X_t))_{t\geq0}\) does not explode. Moreover, uniqueness of the infinitesimally invariant distribution, that is Theorem \ref{thm_cpn} (iii), has to be shown differently. This is because there exists no subinterval of \((0,\infty)\) that cannot be reached by jumps, and because a non-zero amount of jumps does occur almost surely during every time interval. Thus, the proofs of Theorem \ref{thm_cpn} (iii) under \textbf{(c1)} and \textbf{(c2)}, respectively, do not apply in this case.

\paragraph*{Target distributions with full support} One might wish to extend the results of Theorem \ref{thm_cpn} for the case when \(\cE=\real\). However, just like in the previous paragraph, the approach used for the proof of Theorem \ref{thm_cpn} (iii) fails due to the fact that there exists no subinterval of \(\real\) that cannot be reached by jumps. Therefore, the proof of the uniqueness of the solution of \eqref{eq_iile} requires different arguments.

\paragraph*{Target measures with disconnected supports} Allowing only \(\cE=\real\) or \(\cE=(0,\infty)\) seems restrictive as the ability to cross gaps is one of the main advantages of the presence of jumps. Intending to allow disconnected supports of the target measure \(\ppi\) one has to assume three things: 
\begin{enumerate}
\item \(\cE\) is an open set,
\item jumps in both directions are possible, i.e. \(\Pi((-\infty,0))>0\) and \(\Pi((0,\infty))>0\),
\item jumps can only land in \(\cE\), i.e. \(\cE+\supp\Pi\subseteq\cE\).
\end{enumerate}
With those three assumptions one can show that, apart from \(\cE=\real\) and \(\cE\) being some half-line, the only option is that \(\cE\) is periodic, that is there exists \(p>0\) such that \(\cE+p=\cE\).\\
However, if \(\cE\neq\real\) is periodic and \(\X\) with state space \(\cO=\cE\) solves \eqref{eq_sde}, then \(\X\) cannot be positive Harris recurrent - regardless of the drift coefficient \(\phi\). The reason for this is simple: The jumps of \(\X\) are space-homogeneous, and \(\cE\) consists of countably many intervals of the same length that can only be connected by jumps. Thus, the mass of an invariant measure concentrated on each of these segments is the same. Therefore, no invariant measure can be finite (apart from the trivial measure).

\paragraph*{Target measures with atoms} An invariant measure \(\ppi\) with \(\ppi(\{x_0\})>0\) for one or more \(x_0\in\real\) can only be achieved by a solution \(\X\) of \eqref{eq_sde} if \(\X\) comes to a halt at \(x_0\). One possible solution might be to set \(\phi(x_0)=0\). At least heuristically this makes sense considering that the denominator in the original definition \eqref{eq_drift_coeff} of \(\phi\) is the density function of \(\ppi\). \\
However, extending Theorem \ref{thm_cpn} to this case needs a new idea since the current proof relies on the fact that any solution of \eqref{eq_iile} can be associated to a locally integrable function.

\paragraph*{Target measures with arbitrary tails} By Assumption \textbf{(b1)}, we require \(\ppi\) to have an exponential tail. This is mostly needed in the proof of Theorem \ref{thm_cpn} (i). As with heavy tailed jumps, using a more sophisticated norm-like function will most likely enable us to consider target measures \(\ppi\) for which only \(|\pi(x)|\leq c\ee^{-\alpha x}\) for all \(x\gg1\) and some constants \(c,\alpha>0\). \\
In case \(\ppi\) has a heavy tail, that is when \(|\pi(x)|\geq c x^{-(1+\alpha)}\) for all \(x\gg1\) and some constants \(c,\alpha>0\), it is not clear whether \(\X\) is positive Harris recurrent or not.

\paragraph*{Higher dimensions} Theorem \ref{thm_infinv} can be extended easily to target measures \(\ppi\) on \((\real^d,\cB(\real^d))\) and \(d\)-dimensional driving noises \(\L\) with \(d\geq2\). Simply use the multi-dimensional counterparts (cf. \cite{behmeoechsler}) to all occurring terms in the definition \eqref{eq_drift_coeff} of the drift coefficient, and make sure that jumps can only land in \(\cE\), and that \(\X\) cannot drift onto \(\partial \cE\).\\
However, Equation \eqref{eq_iile} becomes a partial differential equation in the multi-dimensional case. Thus, Theorem \ref{thm_cpn} cannot be extended with the same approach.

\paragraph*{Lévy-type driving noise} A solution to some of the problems mentioned above, e.g. disconnected supports or atoms, might be to select space dependent driving noises. For the case when \(\L\) is a Lévy-type process (for details see \cite{bottcher}) \cite{behmeoechsler} provides the required framework for defining the drift coefficient. Just like with higher dimensions extending Theorem \ref{thm_infinv} to this setting is feasible while the extension of Theorem \ref{thm_cpn} might require a new approach.

\section{Acknowledgments}\label{sec_ack}
I would like to thank Anita Behme for her advice and I am very grateful for her helpful comments and suggestions.

\bibliographystyle{plain}

\nocite{protter}
\nocite{behmeoechsler}
\nocite{MTI}
\nocite{MTII}
\nocite{MTIII}
\nocite{eliazar}

\bibliography{localLAW}

\begin{thebibliography}{10}

\bibitem{bardenet2017markov}
R.~Bardenet, A.~Doucet, and C.~C. Holmes.
\newblock On {M}arkov {C}hain {M}onte {C}arlo methods for tall data.
\newblock {\em Journal of Machine Learning Research}, 18(47), 2017.

\bibitem{behmeoechsler}
A.~Behme and D.~Oechsler.
\newblock {Invariant measures of Lévy-type operators and their associated
  Markov processes}.
\newblock {Preprint on arXiv:2208.07668}, 2022.

\bibitem{bottcher}
B.~B{\"o}ttcher, R.~Schilling, and J.~Wang.
\newblock {Lévy-Type Processes: Construction, Approximation and Sample Path
  Properties}.
\newblock In O.E. Barndorff-Nielsen, J.~Bertoin, J.~Jacod, and
  C.~Kl\"uppelberg, editors, {\em L\'evy Matters III}, volume 2099 of {\em
  Lecture Notes in Mathematics}. Springer, 2013.

\bibitem{brooks2011handbook}
S.~Brooks, A.~Gelman, G.~Jones, and X.-L. Meng.
\newblock {\em {Handbook of Markov Chain Monte Carlo}}.
\newblock CRC press, 2011.

\bibitem{eliazar}
I.~Eliazar and J.~Klafter.
\newblock {L{\'e}vy-driven Langevin systems: Targeted stochasticity}.
\newblock {\em J. Stat. Phys.}, 111(3):739--768, 2003.

\bibitem{hale}
J.K. Hale.
\newblock {\em Ordinary differential equations}.
\newblock Robert E. Krieger Publishing Company, 2nd edition, 1980.

\bibitem{jacod}
J.~Jacod and A.~Shiryaev.
\newblock {\em Limit theorems for stochastic processes}, volume 288.
\newblock Springer Science \& Business Media, 2013.

\bibitem{kallenberg}
O.~Kallenberg.
\newblock {\em Foundations of modern probability}.
\newblock Springer, 1997.

\bibitem{kendall2005markov}
W.~S. Kendall, F.~Liang, and J.-S. Wang.
\newblock {\em Markov Chain Monte Carlo: Innovations and applications},
  volume~7.
\newblock World Scientific, 2005.

\bibitem{kuhn}
F.~K{\"u}hn.
\newblock {Solutions of Lévy-driven SDEs with unbounded coefficients as Feller
  processes}.
\newblock {\em Proc. Am. Math. Soc.}, 146(8):3591 -- 3604, 2018.

\bibitem{liggett}
T.~M. Liggett.
\newblock {\em Continuous time Markov processes: An introduction}.
\newblock American Mathematical Soc., 2010.

\bibitem{MTI}
S.P. Meyn and R.L. Tweedie.
\newblock {Stability of Markovian processes {I}: Criteria for discrete-time
  chains}.
\newblock {\em Adv. appl. Prob.}, 24(3):542 -- 574, 1992.

\bibitem{MTII}
S.P. Meyn and R.L. Tweedie.
\newblock {Stability of Markovian processes {II}: Continuous-time processes and
  sampled chains}.
\newblock {\em Adv. appl. Prob.}, 25(3):487 -- 517, 1993.

\bibitem{MTIII}
S.P. Meyn and R.L. Tweedie.
\newblock {Stability of Markovian processes {III}: Foster-Lyapunov criteria for
  continuous time processes}.
\newblock {\em Adv. appl. Prob.}, 25(3):518 -- 548, 1993.

\bibitem{FLII}
T.~H. Nguyen, U.~Simsekli, and G.~Richard.
\newblock {Non-asymptotic analysis of Fractional Langevin Monte Carlo for
  non-convex optimization}.
\newblock In {\em International Conference on Machine Learning}, pages
  4810--4819. PMLR, 2019.

\bibitem{protter}
P.~E. Protter.
\newblock {\em Stochastic Integration and Differential Equations}.
\newblock Springer, 2nd edition, 2004.

\bibitem{roberts1996exponential}
G.~O. Roberts and R.~L. Tweedie.
\newblock {Exponential convergence of Langevin distributions and their discrete
  approximations}.
\newblock {\em Bernoulli}, 2(4):341--363, 1996.

\bibitem{schertzer2001fractional}
D.~Schertzer, M.~Larchev{\^e}que, J.~Duan, V.~Yanovsky, and S.~Lovejoy.
\newblock Fractional {F}okker--{P}lanck equation for nonlinear stochastic
  differential equations driven by non-{G}aussian {L}{\'e}vy stable noises.
\newblock {\em J. Math. Phys.}, 42(1):200--212, 2001.

\bibitem{schnurr2009symbol}
A.~Schnurr.
\newblock {\em {The symbol of a Markov semimartingale}}.
\newblock {PhD thesis}, TU Dresden, 2009.

\bibitem{FLI}
U.~{\c{S}}im{\c{s}}ekli.
\newblock {Fractional Langevin Monte Carlo: Exploring L{\'e}vy driven
  stochastic differential equations for Markov Chain Monte Carlo}.
\newblock In {\em International Conference on Machine Learning}, pages
  3200--3209. PMLR, 2017.

\bibitem{welling2011bayesian}
M.~Welling and Y.~W. Teh.
\newblock {Bayesian learning via stochastic gradient Langevin dynamics}.
\newblock In {\em Proceedings of the 28th International Conference on Machine
  Learning (ICML-11)}, pages 681--688, 2011.

\end{thebibliography}

\end{document}